\newcommand{\inR}[1]{\,#1\,}
\newcommand{\UP}{\blacktriangle}
\newcommand{\DOWN}{\blacktriangledown}
\newcommand{\Up}{\vartriangle}
\newcommand{\Down}{\triangledown}
\theoremstyle{plain}
\newtheorem{theorem}{Theorem}[section]
\newtheorem{proposition}[theorem]{Proposition}
\newtheorem{lemma}[theorem]{Lemma}
\newtheorem{corollary}[theorem]{Corollary}
\theoremstyle{remark}
\newtheorem{remark}[theorem]{Remark}
\begin{document}
\title{Rough Sets Determined by Quasiorders}
\author{Jouni J{\"a}rvinen}
\address{Jouni J{\"a}rvinen: Department of Information Technology \\
FI-20014 University of Turku, Finland}
\email{Jouni.Jarvinen@utu.fi}
\author{S{\'a}ndor Radeleczki}
\address{S{\'a}ndor Radeleczki: Institute of Mathematics \\ 
University of Miskolc \\ 3515~Miskolc-Egyetemv{\'a}ros \\ Hungary}
\email{matradi@uni-miskolc.hu}
\thanks{The partial support by Hungarian National Research Found 
(Grant No.~T049433/05 and T046913/04) is acknowledged by the second author}
\author{Laura Veres}
\address{Laura Veres: Institute of Mathematics \\ 
University of Miskolc \\ 3515~Mis\-kolc-Egyetemv{\'a}ros \\ Hungary}
\email{lauracicu@yahoo.com}

\subjclass[2000]{Primary 06A06; Secondary  06D10, 06D15, 68T37}

\keywords{Rough set, rough approximations, quasiorder, Alexandrov topology, 
de~Morgan operation, pseudocomplement, completely distributive lattice, Stone lattice,
completely join-irreducible element}

\begin{abstract}
In this paper, the ordered set of rough sets determined by a quasiorder relation 
$R$ is investigated. We prove that this ordered set is a complete, completely
distributive lattice. We show that on this lattice can be defined three different 
kinds of complementation operations,  and we describe its completely join-irreducible elements. 
We also characterize the case in which this lattice is a Stone lattice. 
Our results generalize some results of J.~Pomyka{\l}a and J.~A.~Pomyka{\l}a (1988)
and M.~Gehrke and E.~Walker (1992) in case  $R$ is an equivalence.
\end{abstract}

\maketitle

\section{Introduction} \label{Sec:ONE}

Rough set theory was introduced by Z.~Pawlak in \cite{Pawl82}. His idea was to
develop a formalism for dealing with vague concepts and sets. In rough set
theory it is assumed that our knowledge is restricted by an indiscernibility
relation. An \emph{indiscernibility relation} is an equivalence $E$ such
that two elements of a universe of discourse $U$ are $E$-equivalent if we
cannot distinguish these two elements by their properties known by us. By
the means of an indiscernibility relation $E$, we can partition the elements
of $U$ into three disjoint classes with respect to any set $X \subseteq U$:

\begin{enumerate}[\rm (1)]

\item The elements which are certainly in $X$. These are elements $x \in U$
whose $E$-class $x/E$ is included in $X$.

\item The elements which certainly are not in $X$. These are elements $x \in U
$ such that their $E$-class $x/E$ is included in $X$'s complement $X^c$.

\item The elements which are possibly in $X$. These are elements whose 
$E$-class intersects with both $X$ and $X^c$. In other words, $x/E$ is included
neither in $X$ nor in $X^c$.
\end{enumerate}

Based on this observation, Z.~Pawlak defined the \emph{lower approximation} 
$X^\DOWN$ of $X$ to be the set of those elements $x \in U$ whose $E$-class is
included in $X$. The \emph{upper approximation} $X^\UP$ of $X$ consists of
elements $x \in X$ whose $E$-class intersects with $X$. Then, the sets $X^\DOWN$
and $X^\UP$ can be viewed as sets of elements that belong certainly and
possibly to $X$, respectively. The difference $X^\UP \setminus X^\DOWN$ can be
viewed as the actual area of uncertainty.

Interestingly, we may define another indiscernibility relation, but now
between subsets of $U$. The relation $\equiv$ is based on approximations and
it is called \emph{rough equality}. The sets $X$ and $Y$ are $\equiv$-related 
if both of their approximations are the same, that is, $X^\DOWN =
Y^\DOWN$ and $X^\UP = Y^\UP$. The equivalence classes of $\equiv$ are called 
\emph{rough sets}. Each element in the same rough set looks the same, when
observed through the knowledge given by the indiscernibility relation $E$.
Namely, if $X \equiv Y$, then exactly the same elements belong certainly and
possibly to $X$ and $Y$.

Lattice-theoretical study of rough sets was initiated by T.~B.~Iwi{\'n}ski in 
\cite{Iwin87}. He noticed that rough sets can be represented simply by their
approximations. Hence, the set of rough sets can be defined as 
\begin{equation*}
\textit{RS\/} = \{ (X^\DOWN,X^\UP) \mid X \subseteq U \}. 
\end{equation*}
T.~B.~Iwi{\'n}ski also noted that $\textit{RS\/}$ may be canonically ordered by the
coordinatewise order: 
\begin{equation*}
(X^\DOWN,X^\UP) \leq (Y^\DOWN,Y^\UP) \iff X^\DOWN \subseteq Y^\DOWN \mbox{ \
and \ } X^\UP \subseteq Y^\UP. 
\end{equation*}

J.~Pomyka{\l }a and J.~A.~Pomyka{\l }a showed in \cite{PomPom88} that 
$\mathcal{RS} = (\textit{RS\/},\leq)$ is a Stone lattice. 
Later this result was improved by S.~D.~Comer \cite{Com91} by showing that in fact 
$\mathcal{RS}$ is a regular double Stone lattice. Note that a double Stone lattice 
$(L,\leq)$ with a pseudocomplement $^* \colon L \to L$ and a dual pseudocomplement
$^+ \colon L \to L$ is a regular double Stone lattice
if $x^* = y^*$ and $x^+ = y^+$ imply $x = y$ for all $x,y \in L$; see \cite{Schmitt76}. 

Finally, in \cite{GeWa92} M.~Gehrke and E.~Walker described the structure of 
$\mathcal{RS}$ precisely. They showed that $\mathcal{RS}$ is isomorphic to 
$\mathbf{2}^I \times \mathbf{3}^J$, where $\mathbf{2}$ and $\mathbf{3}$ are
the chains of two and three elements, $I$ is the set of singleton 
$E$-classes, $J$ is the set of non-singleton equivalence classes of $E$,
$\mathbf{2}^I$ is the pointwise ordered set of all mappings from $I$ to the
two-element chain, and $\mathbf{3}^J$ is the pointwise ordered set of all
maps from $J$ to the 3-element chain. Note that if each element of $U$ is 
indiscernible only with itself, then $E$ is the identity relation, all $E$-classes 
are singletons, and $\mathcal{RS}$ is isomorphic to $\mathbf{2}^U$. 
This may be interpreted so that rough sets really generalize ``classical sets''.

In the literature can be found numerous studies on rough sets that are determined 
by so-called \emph{information relations} reflecting distinguishability or indistinguishability 
of the elements of the universe of discourse; see \cite{Jarv07,KomPolSko98} for further references. 
For instance, E.~Or{\l}owska and Z.~Pawlak introduced in \cite{OrlPaw84} many-valued 
information systems in which each attribute attaches a set of values to objects. 
Therefore, in many-valued information systems, it is possible to express, 
for example, similarity, informational inclusion, diversity, and orthogonality  
in terms of information relations.

The idea now is that $R$ may be an arbitrary information relation, and rough lower and 
upper approximations are then defined in terms of $R$. 
This means that $x \in X^\UP$ if there is $y
\in X$ such that $x \,R\, y$, and $x \in X^\DOWN$ if $x \,R\, y$ implies $y
\in X$. Rough equality relation, the set of rough sets $\textit{RS\/}$, and
its partial order are defined as before. 
This kind of generalization is well justified since now it is possible to study
structures determined by other possible types of relation between objects, 
such as, for example, similarity or order.

It is known that if $R$ is reflexive and symmetric, then $\mathcal{RS}$ is
not always even a semilattice \cite{Jarv00}. Similarly, if $R$ is just
transitive, $\mathcal{RS}$ is not necessarily a semilattice \cite{Jarv04}.
However, if $R$ is symmetric and transitive, $\mathcal{RS}$ is a complete
double Stone lattice \cite{Jarv04}. Unfortunately, the structure of 
$\mathcal{RS}$ in case $R$ is a quasiorder, that is, $R$ is reflexive and
transitive, has been unknown. In this paper, we prove that if $R$ is a
quasiorder, then $\mathcal{RS}$ is a complete sublattice of $\wp(U) \times \wp(U)$.

This paper is structured as follows. In the next section, we give the
definition of rough sets determined by arbitrary relations, and recall some
of their well-known properties. We also present a decomposition theorem for
rough sets that are defined by a relation that is at least left-total. At the
end of the section, we recall the essential connection between quasiorders and Alexandrov 
topologies. Section~\ref{Sec:THREE} is devoted to our main result showing that 
for any quasiorder $R$, $\mathcal{RS}$ is a complete sublattice of 
$\wp(U) \times \wp(U)$. Note that this implies
directly that $\mathcal{RS}$ is completely distributive.
Then, we study the lattice structure of $\mathcal{RS}$ more carefully in 
Section~\ref{Sec:Complements}. We show that there can be defined three
different kinds of complementation operations. The completely join-irreducible
and completely meet-irreducible elements of $\mathcal{RS}$ are described in 
Section~\ref{Sec:JoinIrreducibles}. In Section~\ref{Sec:Stonean}, we characterize 
the case in which $\mathcal{RS}$ is a Stone lattice.

\section{Rough Set Approximations} \label{Sec:TWO}

We begin by defining the rough set approximations based on arbitrary binary
relations. Let $R$ be any binary relation on $U$. We denote for any $x \in U$, 
$R(x) = \{ y \in U \mid x \,R\, y \}$. For any subset $X \subseteq U$, the 
\emph{lower approximation} of $X$ is 
\begin{equation*}
X^\DOWN = \{x \in U \mid R(x) \subseteq X \} 
\end{equation*}
and the \emph{upper approximation} of $X$ is 
\begin{equation*}
X^\UP = \{x \in U \mid R(x) \cap X \neq \emptyset \} . 
\end{equation*}
Let $X^c$ denote the \emph{complement} $U \setminus X$ of $X$. Then, 
\begin{equation*}
X^{\UP c} = X^{c \DOWN} \mbox{ \ and \ }
X^{\DOWN c} = X^{c \UP}, 
\end{equation*}
that is, $^\DOWN$ and $^\UP$ are \emph{dual}. In addition, 
\begin{equation*}
\Big ( \bigcup \mathcal{H} \Big )^\UP = \bigcup \{ X^\UP \mid X \in \mathcal{H} \} 
\end{equation*}
and 
\begin{equation*}
\Big ( \bigcap \mathcal{H} \Big )^\DOWN = \bigcap \{ X^\DOWN \mid X \in 
\mathcal{H} \} 
\end{equation*}
for all $\mathcal{H} \subseteq \wp(U)$.
The last two equations imply that the maps $^\UP$ and $^\DOWN$ are order-preserving.

We assume that the reader is familiar with the notions of reflexive, 
symmetric, transitive, quasiorder, and equivalence relations. A
relation $R$ is \emph{left-total}, if for all $x \in U$, there exists $y \in U$
such that $x \inR{R} y$. Note that every reflexive relation is left-total.
In the literature left-total relations are also called  \emph{total}
or \emph{serial relations}, and quasiorders are named as \emph{preorders}. 

Below is listed how properties of the relation $R$ may be expressed 
in terms of approximations. Note that these well-known equivalences
are closely related to correspondence results between modal logic axiom
schemata and different types of Kripke frames; see \cite{Benthem84,BGO94},
for instance. For any binary relation $R$ on $U$, 
\begin{align*}
\mbox{ $R$ is left-total }     & \iff (\forall X \subseteq U)\, X^\DOWN \subseteq X^\UP, \\
\mbox{ $R$ is reflexive }  &\iff (\forall X \subseteq U) \, X \subseteq X^\UP, \\
\mbox{ $R$ is symmetric }  & \iff (\forall X \subseteq U)\, X \subseteq X^{\UP \DOWN}, \\
\mbox{ $R$ is transitive } & \iff (\forall X \subseteq U)\, X^{\UP \UP} \subseteq X^\UP.
\end{align*}
It should be noted that rough sets have close connections to modal, intuitionistic 
and many-valued logics \cite{PagChak08}.

Rough sets may now be defined as in case of equivalences. Let us denote for
any $X \subseteq U$, 
\begin{equation*}
\mathcal{A}(X) = (X^\DOWN,X^\UP), 
\end{equation*}
and call it the \emph{rough set} of $X$. Furthermore, we denote by 
\begin{equation*}
\textit{RS\/} = \{ \mathcal{A}(X) \mid X \subseteq U \} 
\end{equation*}
the set of all rough sets. The set $\textit{RS\/}$ can be ordered
coordinatewise by 
\begin{equation*}
(X^\DOWN,X^\UP) \leq (Y^\DOWN,Y^\UP) \iff X^\DOWN \subseteq Y^\DOWN 
\mbox{ \ and \ } X^\UP \subseteq Y^\UP, 
\end{equation*}
obtaining in this way a bounded partially ordered set
$\mathcal{RS} = (\textit{RS\/},\leq)$ with $\mathcal{A}(\emptyset) = (\emptyset^\DOWN,\emptyset)$ 
as the least element and $\mathcal{A}(U) = (U,U^\UP)$ as the greatest element.
A rough set $\mathcal{A}(X)$ is called an \emph{exact} element of $\textit{RS\/}$ if
$X^\DOWN = X = X^\UP$. 

Let us define the mapping:
\[
c \colon \textit{RS\/} \to \textit{RS\/}, \mathcal{A}(X) \mapsto \mathcal{A}(X^c).
\]
Since $c((X^\DOWN,X^\UP)) = (X^{\UP c}, X^{\DOWN c})$ for any $X \subseteq U$, the
mapping $c$ is well defined, and it is easy to see that the pair $(c,c)$ is an
order-reversing Galois connection on $\mathcal{RS}$. This implies directly the
following proposition.

\begin{proposition} \label{Prop:SelfDual}
The partially ordered set $\mathcal{RS}$ is self-dual, that is, $\mathcal{RS}$ is
order-isomorphic to its dual $\mathcal{RS}^\mathrm{op}$.
\end{proposition}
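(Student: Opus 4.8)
The plan is to show that the map $c$ defined just before the statement is the desired order-isomorphism between $\mathcal{RS}$ and its dual. The paper has told us that $c \colon \mathcal{A}(X) \mapsto \mathcal{A}(X^c)$ is well defined, with the explicit coordinate formula $c\bigl((X^\DOWN, X^\UP)\bigr) = (X^{\UP c}, X^{\DOWN c})$, and that the pair $(c,c)$ is an order-reversing Galois connection. So the content I need to extract is: (i) $c$ is a bijection from $\textit{RS\/}$ to itself, and (ii) $c$ is order-reversing, i.e. it reverses $\leq$. Together these say $c$ is an order-anti-isomorphism of $\mathcal{RS}$ onto itself, which is exactly an order-isomorphism $\mathcal{RS} \to \mathcal{RS}^{\mathrm{op}}$.

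First I would verify that $c$ is an involution, $c \circ c = \mathrm{id}$. This is immediate from the definition, since $c(c(\mathcal{A}(X))) = c(\mathcal{A}(X^c)) = \mathcal{A}(X^{cc}) = \mathcal{A}(X)$, using $X^{cc} = X$. An involution is automatically a bijection, so this settles (i) at once and also guarantees $c$ is its own inverse.

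Next I would check that $c$ reverses the order. Using the stated coordinate formula, suppose $(X^\DOWN, X^\UP) \leq (Y^\DOWN, Y^\UP)$, so that $X^\DOWN \subseteq Y^\DOWN$ and $X^\UP \subseteq Y^\UP$. Taking complements reverses inclusions, giving $Y^{\DOWN c} \subseteq X^{\DOWN c}$ and $Y^{\UP c} \subseteq X^{\UP c}$. Reading off the coordinates of $c\bigl((X^\DOWN, X^\UP)\bigr) = (X^{\UP c}, X^{\DOWN c})$ and $c\bigl((Y^\DOWN, Y^\UP)\bigr) = (Y^{\UP c}, Y^{\DOWN c})$, these two inclusions say precisely $c\bigl((Y^\DOWN, Y^\UP)\bigr) \leq c\bigl((X^\DOWN, X^\UP)\bigr)$. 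Hence $c$ is order-reversing, and the converse implication follows by applying the same argument to $c \circ c = \mathrm{id}$.

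Since $c$ is an order-reversing involution, it is an order-isomorphism from $\mathcal{RS}$ onto $\mathcal{RS}^{\mathrm{op}}$, which is the assertion. I do not anticipate a serious obstacle here: the only point requiring the slightest care is matching up which coordinate maps to which under the dual operations $^\DOWN$ and $^\UP$, and this is handled cleanly by the duality relations $X^{\UP c} = X^{c \DOWN}$ and $X^{\DOWN c} = X^{c \UP}$ already recorded in the excerpt, which are exactly what make $c$ well defined and give the coordinate formula. The proof is therefore essentially a transcription of the properties of the Galois connection $(c,c)$ already established.
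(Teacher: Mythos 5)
Your proposal is correct and follows essentially the same route as the paper: the paper also takes the map $c$, notes the coordinate formula $c\bigl((X^\DOWN,X^\UP)\bigr)=(X^{\UP c},X^{\DOWN c})$, and concludes self-duality from $(c,c)$ being an order-reversing Galois connection, which amounts precisely to the order-reversing involution property you verify explicitly. Your write-up simply fills in the details the paper declares ``easy to see.''
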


For any binary relation $R$ on $U$, a set $C$ is called a \emph{connected component} of $R$, 
if $C$ is an equivalence class of the smallest equivalence relation containing $R$.
Let us denote by $\mathfrak{Co}$ the set of all connected components. 
Clearly, for any connected component $C \in \mathfrak{Co}$ and $x \in U$,
$R(x)\cap C \neq \emptyset $ implies $x \in C$, and $x \in C$ implies $R(x)\subseteq C$.
Hence, $C^\UP \subseteq C \subseteq C^\DOWN$. 

Let $R$ be a left-total relation. Because $X^\DOWN \subseteq X^\UP$ for every $X \subseteq U$,
any connected component $C \in \mathfrak{Co}$ of $R$ satisfies
$C^\DOWN = C^\UP = C$ and thus $\mathcal{A}(C)=(C,C)$ is an exact element of $\textit{RS\/}$. 
Additionally, we denote for each $C \in \mathfrak{Co}$ by $\textit{RS\/}(C)$ the set of rough 
sets on the component  $C$ determined by the restriction of $R$ to $C$. 
The corresponding ordered set is denoted by $\mathcal{RS}(C)$.

Next we present a decomposition theorem for rough sets determined by left-total
relations. First, we prove the following lemma.

\begin{lemma} \label{Lem:Separation}
If $R$ is a left-total relation on $U$, then the following assertions hold.
\begin{enumerate}[\rm (i)]
\item If $\{C_i \mid i \in I\} \subseteq \mathfrak{Co}$ is a subset of connected 
components of $R$,  then for any family $\{ (X_i^\DOWN,X_i^\UP) \in \textit{RS\/}(C_i) \mid i \in I \}$,
the pair $\left( \bigcup_{i \in I} X_i^\DOWN,\bigcup_{i \in I} X_i^\UP \right)$ is a rough set on $U$.

\item If $(X^\DOWN,X^\UP)$ is a rough set on $U$, then the pair $(X^\DOWN \cap C,X^\UP\cap C)$ is a rough 
set on $C$ for any connected component $C \in \mathfrak{Co}$.
\end{enumerate}
\end{lemma}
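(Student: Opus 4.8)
The plan is to exploit the crucial feature of a left-total relation that $R$ is \emph{confined} to its connected components. The excerpt already records that $x \in C$ implies $R(x) \subseteq C$ and that $R(x) \cap C \neq \emptyset$ implies $x \in C$, while left-totality guarantees $R(x) \neq \emptyset$ for every $x$. My first step would be to establish a localization principle: if $Y \subseteq C$ for some $C \in \mathfrak{Co}$, then the approximations $Y^\DOWN$ and $Y^\UP$ computed in $U$ coincide with those computed via the restriction of $R$ to $C$, and moreover both are contained in $C$. For the lower approximation this holds because $R(x) \subseteq Y \subseteq C$ forces $R(x) \cap C \neq \emptyset$ (using $R(x) \neq \emptyset$), hence $x \in C$; for the upper approximation, $R(x) \cap Y \neq \emptyset$ already gives $R(x) \cap C \neq \emptyset$, hence $x \in C$. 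With this in hand the symbols $X_i^\DOWN$ and $X_i^\UP$ may be read unambiguously as the $U$-approximations of witnessing sets $X_i \subseteq C_i$.

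For part (i) I would take $X = \bigcup_{i \in I} X_i$ as witness and show that $\mathcal{A}(X) = \left( \bigcup_{i} X_i^\DOWN, \bigcup_{i} X_i^\UP \right)$. The upper coordinate is immediate from the identity $\left( \bigcup \mathcal{H} \right)^\UP = \bigcup \{ Z^\UP \mid Z \in \mathcal{H} \}$ recalled earlier. The lower coordinate is the real work, since $^\DOWN$ does not distribute over unions. The inclusion $\bigcup_{i} X_i^\DOWN \subseteq X^\DOWN$ follows from monotonicity because $X_i \subseteq X$. For the reverse inclusion I would take $x \in X^\DOWN$, so $R(x) \subseteq X$; choosing any $y \in R(x)$ (which exists by left-totality) places $y$ in some $X_i \subseteq C_i$, whence $R(x) \cap C_i \neq \emptyset$ and so $x \in C_i$, giving $R(x) \subseteq C_i$. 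Since distinct components are disjoint, $X \cap C_i = X_i$, and therefore $R(x) \subseteq X \cap C_i = X_i$, i.e. $x \in X_i^\DOWN$.

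For part (ii) I would take $Y = X \cap C$ as witness and show that the approximations of $Y$ computed in $C$ are exactly $X^\DOWN \cap C$ and $X^\UP \cap C$. The lower coordinate follows cleanly from $\left( \bigcap \mathcal{H} \right)^\DOWN = \bigcap \{ Z^\DOWN \mid Z \in \mathcal{H} \}$ together with $C^\DOWN = C$, giving $(X \cap C)^\DOWN = X^\DOWN \cap C$; by the localization principle this equals the lower approximation of $Y$ computed in $C$. The upper coordinate is the obstacle, because $^\UP$ distributes over unions rather than intersections. Here I would argue both inclusions directly: if $R(x) \cap (X \cap C) \neq \emptyset$ then simultaneously $R(x) \cap X \neq \emptyset$ and $R(x) \cap C \neq \emptyset$, so $x \in X^\UP \cap C$; conversely, if $x \in C$ then $R(x) \subseteq C$, so $R(x) \cap X = R(x) \cap (X \cap C)$, and $x \in X^\UP$ then turns this into a nonempty intersection with $X \cap C$.

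The main obstacle throughout is precisely the failure of $^\DOWN$ to commute with unions in part (i) and of $^\UP$ to commute with intersections in part (ii); in both places the argument is rescued by the confinement of $R$ to components. In part (i) the nonemptiness of $R(x)$ supplied by left-totality is essential, since otherwise an element with $R(x) = \emptyset$ would lie in $X^\DOWN$ without belonging to any single $X_i^\DOWN$.
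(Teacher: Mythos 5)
Your proposal is correct and follows essentially the same route as the paper: the same witnesses ($\bigcup_{i \in I} X_i$ in part (i) and $X \cap C$ in part (ii)) and the same key steps (left-totality supplying a successor of each element, confinement of $R(x)$ to a single component, and disjointness of distinct components). The only difference is that you state explicitly the localization principle identifying the $U$-approximations of a subset of $C$ with its approximations under the restriction of $R$ to $C$, which the paper leaves implicit --- a worthwhile clarification, but not a different argument.
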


\begin{proof}
(i) \ Clearly, $\left( \bigcup_{i \in I} X_i \right ) ^\UP = \bigcup_{i \in I} X_i^\UP$.
We will show that also $\left( \bigcup_{i \in I} X_i \right )^\DOWN = \bigcup_{i \in I} X_i^\DOWN$.
Since $X_i^\DOWN \subseteq \left( \bigcup_{i \in I}X_i \right)^\DOWN$ for all $i \in I$, we have
$\bigcup_{i \in I} X_i^\DOWN \subseteq \left ( \bigcup_{i \in I} X_i \right)^\DOWN$. 
On the other hand, let $x \in \left( \bigcup_{i \in I}X_i \right )^\DOWN$. 
Because $R(x) \neq \emptyset$ and $R(x)\subseteq \bigcup_{i \in I}X_i \subseteq \bigcup_{i \in I} C_i$, 
there exist $k \in I$ and $y \in C_k$ such that $x \inR{R} y$. 
Hence, $x\in C_k$ and $R(x) \subseteq C_k$. This implies
\[
R(x) \subseteq C_k \cap \Big ( \bigcup_{i \in I} X_i \Big) = \bigcup_{i \in I} \Big ( C_k \cap X_i \Big ) = X_k,
\]
because $X_k \subseteq C_k$ and $X_i \cap C_k = \emptyset$ for all $i \in I \setminus \{ k \}$.
Thus,  we obtain $x \in X_i^\DOWN \subseteq \bigcup_{i \in I} X_i^\DOWN$, which gives
$\left( \bigcup_{i \in I} X_i \right)^\DOWN = \bigcup_{i \in I} X_i^\DOWN$. 
Therefore,
$\mathcal{A} \left( \bigcup_{i \in I} X_i\right) = 
\left( \bigcup_{i \in I}X_i^\DOWN,\bigcup_{i \in I} X_i^\UP \right)$ is a rough set on $U$.
\medskip%

(ii) Let $C \in \mathfrak{Co}$. 
We prove that $(X\cap C)^\DOWN=X^\DOWN \cap C$ and $(X\cap C)^\UP = X^\UP\cap C$.
It is easy to see that
\[ 
 (X \cap C)^\DOWN = X^\DOWN \cap C^\DOWN = X^\DOWN \cap C.
\]
Furthermore, $\left ( X \cap C \right )^\UP \subseteq X^\UP$ and 
$\left( X \cap C \right )^\UP \subseteq C^\UP = C$ imply
$(X \cap C)^\UP \subseteq X^\UP \cap C$.
For the converse, suppose that $x\in X^\UP \cap C$. 
Then, $R(x) \cap X  \neq \emptyset$ and $R(x) \subseteq C$ imply 
$R(x) \cap (X\cap C) = (R(x) \cap C) \cap X = R(x) \cap X \ne \emptyset$, 
that is, $x \in (X \cap C)^\UP$. This proves $(X \cap C)^\UP = X^\UP \cap C$. 
\end{proof}

\begin{corollary} \label{Cor:Exact}
If $R$ is a left-total relation, then for any subset 
$\mathcal{H} \subseteq \mathfrak{Co}$ of the connected components of $R$, 
the rough set $\mathcal{A}\left( \bigcup \mathcal{H} \right )$ 
is an exact element of $\textit{RS\/}$.
\end{corollary}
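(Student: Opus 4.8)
The plan is to reduce the statement directly to Lemma~\ref{Lem:Separation}(i), using the fact already recorded in the text that, for a left-total relation, every connected component is exact. Write $\mathcal{H} = \{ C_i \mid i \in I \} \subseteq \mathfrak{Co}$ and set $Y = \bigcup \mathcal{H} = \bigcup_{i \in I} C_i$. What must be shown is that $\mathcal{A}(Y) = (Y^\DOWN, Y^\UP)$ is exact, i.e.\ that $Y^\DOWN = Y = Y^\UP$.

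The upper approximation is immediate and requires no new work: since $^\UP$ commutes with arbitrary unions, $Y^\UP = \bigcup \{ C^\UP \mid C \in \mathcal{H} \}$, and because $R$ is left-total each component satisfies $C^\UP = C$, so $Y^\UP = \bigcup \{ C \mid C \in \mathcal{H} \} = Y$. The lower approximation is the only genuine point, precisely because $^\DOWN$ does not commute with unions in general. Here I would invoke Lemma~\ref{Lem:Separation}(i) with the particular choice $X_i = C_i$ for each $i \in I$. For a left-total relation each $C_i$ is exact, so $(C_i, C_i) = (C_i^\DOWN, C_i^\UP)$ is a rough set lying in $\textit{RS\/}(C_i)$, and thus the family $\{ (C_i^\DOWN, C_i^\UP) \in \textit{RS\/}(C_i) \mid i \in I \}$ meets the hypothesis of the lemma. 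The lemma, and in particular the equality $\bigl( \bigcup_{i \in I} X_i \bigr)^\DOWN = \bigcup_{i \in I} X_i^\DOWN$ established inside its proof, then yields $Y^\DOWN = \bigl( \bigcup_{i \in I} C_i \bigr)^\DOWN = \bigcup_{i \in I} C_i^\DOWN = \bigcup_{i \in I} C_i = Y$.

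Combining the two computations gives $\mathcal{A}(Y) = (Y^\DOWN, Y^\UP) = (Y, Y)$, which is an exact element of $\textit{RS\/}$ by definition, completing the argument. I do not expect a real obstacle here: once Lemma~\ref{Lem:Separation}(i) is available, the corollary is essentially a matter of feeding in the exact rough sets $(C_i, C_i)$. The only subtlety worth flagging is that the key step is the lower-approximation identity, since the behaviour of $^\DOWN$ under unions is exactly what fails for arbitrary subsets and is supplied by the left-totality assumption through the lemma.
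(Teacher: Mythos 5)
Your proof is correct and follows essentially the same route as the paper: the paper's own argument also invokes the equality $\left( \bigcup_{i \in I} X_i \right)^\DOWN = \bigcup_{i \in I} X_i^\DOWN$ from the proof of Lemma~\ref{Lem:Separation}(i), applied with $X_i = C_i$, together with the exactness $C^\DOWN = C = C^\UP$ of components under a left-total relation. Your separate handling of the upper approximation via distributivity of $^\UP$ over unions is exactly the first line of that lemma's proof, so there is no substantive difference.
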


\begin{proof}
Let $\mathcal{H} \subseteq \mathfrak{Co}$.
By the proof of Lemma~\ref{Lem:Separation}(i),
\[
\left ( \bigcup \mathcal{H}  \right )^\DOWN =
\bigcup_{C \in \mathcal{H}} C^\DOWN = 
\bigcup \mathcal{H} 
= \bigcup_{C \in \mathcal{H}} C^\UP = 
\left ( \bigcup \mathcal{H} \right )^\UP.
\]
\end{proof}

For any index set $I$, let $\mathcal{P} = \prod_{i \in I} \mathcal{P}_i$ be 
the  \emph{Cartesian product} of the partially ordered sets $\mathcal{P}_i = (P_i,\leq_i)$,
and let $x_i$ denote the $i$-th coordinate of an element 
$x \in P= \prod_{i \in I}P_i$. We will also write $x = (x_i)_{i \in I}$. 
Recall that the partial order $\leq$ of $\mathcal{P}$ is
defined coordinatewise (see e.g \cite{Trott92}), that is,  for any $x,y\in P$, we have 
$x\leq y$ if and only if $x_i \leq_i y_i$ for all $i \in I$.

\begin{theorem}\label{Thm:Representation} If $R$ is a left-total relation on $U$, then
$\mathcal{RS}$ is order-isomorphic to $\prod_{C \in \mathfrak{Co}} \mathcal{RS}(C)$.
\end{theorem}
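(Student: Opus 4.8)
The plan is to construct the order-isomorphism explicitly from the two halves of Lemma~\ref{Lem:Separation}. Since the connected components of $R$ are the blocks of an equivalence relation, the family $\mathfrak{Co}$ is a partition of $U$: its members are pairwise disjoint and their union is $U$. I would define a map $\varphi \colon \mathcal{RS} \to \prod_{C \in \mathfrak{Co}} \mathcal{RS}(C)$ by sending a rough set $(X^\DOWN, X^\UP)$ to the family $\bigl( (X^\DOWN \cap C, X^\UP \cap C) \bigr)_{C \in \mathfrak{Co}}$ of its componentwise restrictions. Part~(ii) of the lemma guarantees that each coordinate $(X^\DOWN \cap C, X^\UP \cap C)$ genuinely lies in $\textit{RS\/}(C)$, so $\varphi$ is well defined into the product.

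For the inverse I would use part~(i): on a family $\bigl( (Y_C^\DOWN, Y_C^\UP) \bigr)_{C \in \mathfrak{Co}}$ define $\psi$ by gluing, $\psi\bigl( (Y_C^\DOWN, Y_C^\UP)_C \bigr) = \bigl( \bigcup_{C} Y_C^\DOWN, \bigcup_{C} Y_C^\UP \bigr)$, which part~(i) shows is a rough set on $U$. The two maps are mutually inverse purely by the partition property. Composing $\varphi$ then $\psi$, distributivity of intersection over union gives $\bigcup_C (X^\DOWN \cap C) = X^\DOWN \cap \bigcup_C C = X^\DOWN$, and likewise for the upper component, so $\psi \circ \varphi = \mathrm{id}$. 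Composing $\psi$ then $\varphi$, the inclusions $Y_C^\DOWN, Y_C^\UP \subseteq C$ together with the pairwise disjointness of the components yield $\bigl( \bigcup_C Y_C^\DOWN \bigr) \cap D = Y_D^\DOWN$ for each fixed $D$, and similarly for the upper part, so $\varphi \circ \psi = \mathrm{id}$.

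Finally I would check that $\varphi$ is an order-isomorphism by verifying that both it and its inverse preserve order; since the orders on $\mathcal{RS}$ and on the product are coordinatewise, this reduces to the monotonicity of the two set operations involved. If $X^\DOWN \subseteq Z^\DOWN$ and $X^\UP \subseteq Z^\UP$, then $X^\DOWN \cap C \subseteq Z^\DOWN \cap C$ and $X^\UP \cap C \subseteq Z^\UP \cap C$ for every $C$, so $\varphi$ is order-preserving; the same argument with unions shows $\psi$ is order-preserving. As $\psi = \varphi^{-1}$, this exhibits $\varphi$ as the desired order-isomorphism.

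I expect the only real content to reside in Lemma~\ref{Lem:Separation} itself, which is already established; here the work is just the bookkeeping that the \emph{restrict} and \emph{glue} maps undo each other. The single place to stay careful is keeping track of which approximations are computed inside a component and which in all of $U$, but the lemma has already reconciled these, so no genuine obstacle remains.
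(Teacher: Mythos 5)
Your proof is correct and follows essentially the same route as the paper's: both define the restriction map $(X^\DOWN,X^\UP)\mapsto((X^\DOWN\cap C,X^\UP\cap C))_{C\in\mathfrak{Co}}$ and the gluing map via unions, invoke the two parts of Lemma~\ref{Lem:Separation} for well-definedness, and verify that the maps are mutually inverse and order-preserving using the partition property of $\mathfrak{Co}$ and the inclusions $Y_C^\DOWN,Y_C^\UP\subseteq C$. No gaps; this matches the paper's argument step for step.
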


\begin{proof} Assume that $\mathfrak{Co}  = \{ C_i \mid i \in I\} $. 
Consider the maps 
\[ \Phi\colon \textit{RS\/} \to \prod_{i \in I} \textit{RS\/}(C_{i})
\mbox{ \quad and \quad }
\Psi \colon \prod_{i \in I} \textit{RS\/}(C_{i})\to \textit{RS\/}
\] 
defined by
\[
\Phi(\mathcal{A}(X)) = ((X^\DOWN\cap C_{i},X^{\UP}\cap C_{i}))_{i\in I}
\]
for any $\mathcal{A}(X) = (X^{\DOWN},X^\UP) \in \textit{RS\/}$, and
\[
\Psi(((X_{i}^\DOWN,X_{i}^\UP))_{i\in I}) =
\Big ( \bigcup_{i \in I }X_{i}^\DOWN, \bigcup_{i \in I}X_{i}^\UP\Big)
\] 
for any $((X_{i}^\DOWN,X_{i}^\UP))_{i\in I}\in \prod_{i \in I}\textit{RS\/}(C_{i})$.

\medskip 

In view of Lemma~\ref{Lem:Separation}, the maps $\Phi$ and $\Psi$ are well-defined, 
and it is easy to check that both $\Phi$ and $\Psi$ are order-preserving.
Hence, to prove that $\Phi$ and $\Psi$ are order-isomorphisms, it is enough to show 
that they are mutually inverse maps.

For any $\mathcal{A}(X) = (X^\DOWN,X^\UP) \in \textit{RS\/}$, we obtain:
\begin{align*}
\Psi\left( \Phi(\mathcal{A}(X))\right) & =\Psi (((X^\DOWN\cap C_{i},X^\UP\cap C_{i}))_{i\in I}) \\
      & = \Big ( \bigcup_{i \in I} (X^\DOWN \cap C_{i}), \bigcup_{i \in I} (X^\UP\cap C_{i}) \Big ) \\
      & = \Big ( X^\DOWN \cap \big( \bigcup_{i \in I}C_{i} \big ), X^\UP\cap \big ( \bigcup_{i \in I}C_{i} \big ) \Big ) \\
      & = (X^\DOWN,X^\UP) \\
      & = \mathcal{A}(X).
\end{align*}
Furthermore, for any $((X_{i}^\DOWN,X_{i}^\UP))_{i \in I} \in \prod_{i \in I} \textit{RS\/}(C_{i})$, we have:
\begin{align*}
\Phi (\Psi(((X_{i}^\DOWN,X_{i}^\UP))_{i \in I})) 
   & =  \Phi \Big ( \big ( \bigcup_{j \in I} X_{j}^\DOWN,\bigcup_{j \in J}X_{j}^\UP \big ) \Big ) \\
   & =  \Big ( \Big ( \big ( \bigcup_{j \in I} X_{j}^\DOWN \big ) \cap C_{i}, 
                      \big ( \bigcup_{j\in I} X_{j}^\UP \big ) \cap C_{i} \Big ) \Big)_{i\in I} \, .
\end{align*}
Because $(X_j^\DOWN,X_j^\UP) \in \textit{RS\/}(C_j)$ for every $j \in I$, 
$X_{j}^\DOWN$ and  $X_{j}^\UP$ are subsets of $C_{j}$. 
Additionally,  for any $i,j \in I$ such that $i \neq j$, $C_{i}\cap C_{j} = \emptyset$.
These facts imply that
\[
\Big( \bigcup_{j \in I}X_{j}^\DOWN \Big) \cap C_{i} 
 = \bigcup_{j \in I} \left ( X_{j}^\DOWN \cap C_{i} \right) = X_{i}^\DOWN 
\]
and 
\[
\Big ( \bigcup_{j \in I}X_{j}^\UP \Big ) \cap C_{i} 
= \bigcup_{j \in I} \left ( X_{j}^\UP \cap C_{i} \right ) = X_{i}^\UP,
\]
for each $i\in I$. Thus, we obtain 
$\Phi(\Psi(((X_{i}^\DOWN,X_{i}^{\UP}))_{i\in I}))=((X_{i}^\DOWN,X_{i}^\UP))_{i\in I}$.

In view of the above equalities, the maps $\Phi $ and $\Psi $ are order-preserving
inverse mappings of each other and hence they are order-isomorphisms. 
So, $\mathcal{RS}$ and $\prod_{C \in \mathfrak{Co}}\mathcal{RS} (C)$ are order-isomorphic.
\end{proof}

We may also determine rough set approximations in terms of the inverse $R^{-1}$ of $R$, 
that is, 
\begin{equation*}
X^\Down = \{x \in U \mid R^{-1}(x) \subseteq X \} 
\end{equation*}
and 
\begin{equation*}
X^\Up = \{x \in U \mid R^{-1}(x) \cap X \neq \emptyset \}. 
\end{equation*}
Interestingly, the pairs $(^\UP,^\Down \! )$ and $(^\Up,^\DOWN \! )$ are
order-preserving Galois connections on $\wp(U)$. The end of this section is
devoted to approximations determined by quasiorders. First, we recall the
notion of Alexandrov topologies that is closely connected to quasiorders -- 
for further details see \cite{Alex37,Birk37,ErneRein95}, for example.

An \emph{Alexandrov topology} is a topology $\mathcal{T}$ that
contains also all arbitrary intersections of its members.
Let $\mathcal{T}$ be an Alexandrov topology  $\mathcal{T}$ on $U$.
Then, for each $X \subseteq U$, there exists the \emph{smallest neighbourhood}
\[
N_\mathcal{T}(X) = \bigcap \{ Y \in \mathcal{T} \mid X \subseteq Y \}.
\]
In particular, the smallest neighbourhood of a point $x \in U$
is denoted by $N_\mathcal{T}(x)$. The family 
\begin{equation*}
\mathcal{B_T} = \{ N_\mathcal{T}(x) \mid x \in U\} 
\end{equation*}
is the \emph{smallest base} of the Alexandrov topology $\mathcal{T}$. 
This means that every member $X$ of $\mathcal{T}$ can be expressed
as a union of some (or none) elements of $\mathcal{B_T}$, that is,
$X = \bigcup \{ N_\mathcal{T}(x) \mid x \in X \}$. In addition, $\mathcal{B}_\mathcal{T}$
is smallest such set.

There is a close connection between quasiorders and Alexandrov topologies.
This correspondence will turn very useful in Section~\ref{Sec:JoinIrreducibles},
where we will study the completely join-irreducible elements of $\mathcal{RS}$.
Let $R$ be a quasiorder on a set $U$. We may now define an Alexandrov topology 
${\mathcal T}_R$ on $U$ consisting of all ``upward-closed'' subsets of $U$ with respect to the
relation $R$, that is, 
\[
\mathcal{T}_R = \{ A \subseteq U \mid (\forall x,y \in U) \; x \in A \ \ \& \ \ x \inR{R} y \Longrightarrow y \in A \} \\
\]
On the other hand, the set $R(x)$ is the smallest neighbourhood
of the point $x$ in the Alexandrov topology $\mathcal{T}_R$ and
clearly $y \in R(x)$ if  and only if $x \inR{R} y$. This hints how we
may also determine quasiorders by means of Alexandrov topologies. If
$\mathcal{T}$ is an Alexandrov topology on $U$, then we define a
quasiorder  $R_\mathcal{T}$ on $U$ by setting
\[ x \inR{R_\mathcal{T}} y \iff y \in  N_\mathcal{T}(x). \]
The correspondences $R\ \mapsto \mathcal{T}_R$ and $\mathcal{T} \mapsto \ R_\mathcal{T}$
are one-to-one.  It is also well known that the categories of quasiordered sets and 
Alexandrov spaces are isomorphic, as discussed in \cite{Erne91}, for example.

For a quasiorder $R$, the rough approximations satisfy for all $X \subseteq U$: 
\begin{equation*}
X^{\UP \Down} = X^\UP, \ X^{\Up \DOWN} = X^\Up, \ X^{\DOWN \Up} = X^\DOWN, \ X^{\Down \UP} = X^\Down. 
\end{equation*}
These approximations determine two Alexandrov topologies on $U$: 
\begin{equation*}
\mathcal{T}^\UP = \{ X^\UP \mid X \subseteq U\} = \{ X^\Down \mid X
\subseteq U\} 
\end{equation*}
and 
\begin{equation*}
\mathcal{T}^\DOWN = \{ X^\DOWN \mid X \subseteq U\} = \{ X^\Up \mid X
\subseteq U\}. 
\end{equation*}
Note that $\mathcal{T}^\DOWN$ is the same as $\mathcal{T}_R$ above.
Clearly, these topologies are dual, that is, for all $X \subseteq U$, 
\begin{equation*}
X \in \mathcal{T}^\UP \iff X^c \in \mathcal{T}^\DOWN 
\end{equation*}
For the Alexandrov topology $\mathcal{T}^\UP$:

\begin{enumerate}[(i)]

\item $^\UP \colon \wp(U) \to \wp(U)$ is the smallest neighbourhood operator.

\item $^\Up \colon \wp(U) \to \wp(U)$ is the closure operator. Note that the
family of closed sets for the topology $\mathcal{T}^\UP$ is $\mathcal{T}^\DOWN$.

\item $^\Down \colon \wp(U) \to \wp(U)$ is the interior operator, that is,
it maps each set to the greatest open set contained into the set in question.

\item The set $\{ \, \{x\}^\UP \mid x \in U\} = \{ R^{-1}(x) \mid x \in U\}$
is the smallest base.
\end{enumerate}
Similarly, for the topology $\mathcal{T}^\DOWN$:

\begin{enumerate}[(i)]

\item $^\Up \colon \wp(U) \to \wp(U)$ is the smallest neighbourhood operator.

\item $^\UP \colon \wp(U) \to \wp(U)$ is the closure operator.

\item $^\DOWN \colon \wp(U) \to \wp(U)$ is the interior operator.

\item The set $\{ \, \{x\}^\Up \mid x \in U\} = \{ R(x) \mid x \in U\}$ is
the  smallest base.
\end{enumerate}

\section{Lattices of Rough Sets Determined by Quasiorders} \label{Sec:THREE}

In this section, we prove that the quasiorder-based rough sets form a complete lattice.

We start by considering cofinal sets. Let $R$ be a transitive relation
on a non-empty set $U$. A \emph{successor} of $x \in U$ is an element $y \in U$
such that $x \inR{R} y$. Let $X \subseteq Y \subseteq U$. Then,
$X$ is \emph{cofinal in} $Y$ if each $x \in Y$ has a successor in $X$.
By using the notation introduced in Section~\ref{Sec:TWO}, the set
of successors of $x$ is simply $R(x)$. Additionally, $X$ is
cofinal in $Y$ if and only if $R(x) \cap X \neq \emptyset$ for
all $x \in Y$, which is equivalent to $Y \subseteq X^\UP$. 
Since $X \subseteq Y$, this actually means that $X$ is cofinal in $Y$
if and only if $X^\UP = Y^\UP$. We also say that a set is \emph{cofinal},
if it is cofinal in $U$.

In the proof of our main result, we will use the following theorem
for transitive relations on $U$ by A.~H.~Stone.

\begin{theorem}[Theorem~1 of \cite{Stone68}] \label{Thm:Stone}
A necessary and sufficient condition that the set $U$ has a partition into 
$k$ cofinal subsets, is that each element of $U$ has at least $k$ successors.
\end{theorem}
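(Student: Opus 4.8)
The plan is to prove the two directions separately, with the reverse (sufficiency) implication carrying all the difficulty. For \emph{necessity}, suppose $U = \bigcup_{\alpha \in K} X_\alpha$ is a partition into $k = |K|$ pairwise disjoint cofinal subsets. Fix any $x \in U$. Since each $X_\alpha$ is cofinal, $R(x) \cap X_\alpha \neq \emptyset$, so I may choose some $y_\alpha \in R(x) \cap X_\alpha$; because the $X_\alpha$ are pairwise disjoint, the chosen $y_\alpha$ are pairwise distinct, and they are all successors of $x$. Hence $x$ has at least $k$ successors, which is exactly the stated condition.

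For \emph{sufficiency}, the structural fact I would exploit is the consequence of transitivity that $y \in R(x)$ implies $R(y) \subseteq R(x)$: successor sets only shrink as one moves along $R$, so the ``highest'' elements carry the smallest successor sets. I would first settle the case in which $U$ is finite by passing to the strongly connected components of $R$ and their condensation, a finite partial order. For a terminal (maximal) component $S$ and any $x \in S$ one has $R(x) = S$, so the hypothesis forces $|S| \geq k$; I can therefore split each terminal component $S$ into $k$ nonempty pieces $\{S_\alpha \mid \alpha \in K\}$. Defining each class $X_\alpha$ to contain $\bigcup \{ S_\alpha \mid S \text{ terminal}\}$, and distributing the remaining elements of $U$ arbitrarily among the classes, yields a partition into $k$ parts. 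Each class is cofinal because every $x \in U$ reaches some terminal component $S$, whence $R(x) \supseteq S \supseteq S_\alpha \neq \emptyset$ for every $\alpha$, so $R(x)$ meets every class.

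The hard part will be the general case, in which $k$ may be an infinite cardinal and $U$ need not possess any terminal component at all (for instance when $R$ contains infinite strictly ascending chains), so the finite anchoring argument is unavailable. Here I would construct the partition by transfinite recursion: well-order $U$ and build the colouring $f \colon U \to K$ stage by stage, at each stage keeping, for every element already encountered, a large enough reservoir of still-uncoloured successors in its successor set so that all $k$ colours can eventually be placed inside it. Maintaining the invariant that each $R(x)$ still admits every colour — which is where the inequality $|R(x)| \geq k$, together with the nesting $R(y) \subseteq R(x)$, is needed to guarantee that the colour commitments made for one element do not exhaust the successor sets of the others — is the delicate bookkeeping step, and I expect it to be the main obstacle. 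An equivalent packaging is a Zorn's-lemma argument on partial colourings that can be extended without violating this invariant; this transfinite combinatorial core is precisely the content of Stone's original argument in \cite{Stone68}, which I would follow.
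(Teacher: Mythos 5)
The first thing to note is that the paper itself contains no proof of this statement: it is imported verbatim as Theorem~1 of A.~H.~Stone's paper \cite{Stone68} and used as a black box (the authors only remark that its proof requires the Axiom of Choice). So there is no internal argument to compare yours against, and your attempt must stand on its own. Two pieces of it are correct and complete. The necessity direction is fine: choosing one successor of $x$ in each of the $k$ pairwise disjoint cofinal classes yields $k$ distinct successors. The finite case of sufficiency is also fine: in the condensation into strongly connected components, every element $R$-reaches some terminal component $S$, transitivity then gives $R(x) \supseteq S$, the hypothesis forces $|S| \geq k$, and splitting each terminal component into $k$ nonempty pieces produces the required partition.

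The genuine gap is the general case, which is where essentially all the content of the theorem lies; note that in the paper's own application the underlying set ($Z^\DOWN$ or $S^\DOWN$ with $k=2$) is an arbitrary, possibly infinite set that may have no maximal elements and no terminal components, so the finite argument covers none of the instances actually needed. Your transfinite recursion is only a plan: the invariant that every $R(x)$ ``still admits every colour among its uncoloured elements'' is exactly what must be proved maintainable, and you give no mechanism for it. The difficulty is real: when $|R(x)| = k$ exactly (e.g.\ $k = 2$ and $R(x)$ a two-element set), $R(x)$ must be coloured rainbow, and colouring a single element simultaneously constrains every successor set containing it, so a greedy or Zorn-style extension can stall unless one exploits structural consequences of transitivity --- for instance, for $k=2$, two distinct two-element successor sets are automatically disjoint, since $y \in R(x)$ forces $R(y) \subseteq R(x)$ while $|R(y)| \geq 2$. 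None of that analysis appears in your sketch; instead you explicitly defer ``the transfinite combinatorial core'' to Stone's original argument. Citing \cite{Stone68} is precisely what the paper does and is legitimate in that context, but as a proof proposal it leaves the sufficiency direction --- the nontrivial half of the statement --- unproven.
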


Let $R$ be a quasiorder on $U$. Then for all $a,b \in U$,
\[
a \inR{R} b \iff b \in R(a) \iff R(b) \subseteq R(a),
\]
and $|R(a)| \geq 1$. Recall from Section~\ref{Sec:TWO} that since $R$ is a quasiorder, 
$^\UP$ is a closure operator and $^\DOWN$ is an interior operator. 
Thus, for any $X \subseteq U$,
\[
X^\DOWN \subseteq X \subseteq X^\UP, \quad
X^{\DOWN \DOWN} = X^\DOWN, \mbox{ \quad and \quad}
X^{\UP \UP} = X^\UP.
\]
In addition, $X \subseteq Y$ implies $X^\DOWN \subseteq Y^\DOWN$ and $X^\UP \subseteq Y^\UP$
for any $X,Y \subseteq U$. These properties are needed in the proof of our next theorem.

As we already mentioned, J.~Pomyka{\l }a and J.~A.~Pomyka{\l }a showed in \cite{PomPom88} that 
for equivalence relations, $\mathcal{RS}$ is a Stone lattice. In their proof they 
used Zermelo's Axiom of Choice. Note that the proof of Theorem~\ref{Thm:Stone} above
also requires Axiom of Choice.

\begin{theorem} \label{Thm:Main}
If $R$ is a quasiorder on a non-empty set $U$, then
$\mathcal{RS}$ is a complete sublattice of $\wp(U) \times \wp(U)$.
\end{theorem}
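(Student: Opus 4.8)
The plan is to show that $\mathcal{RS}$ is closed under the arbitrary joins and meets of $\wp(U)\times\wp(U)$, which are just the coordinatewise union and intersection. The map $c\colon(A,B)\mapsto(B^c,A^c)$ of Proposition~\ref{Prop:SelfDual} is an order-reversing involution of the whole lattice $\wp(U)\times\wp(U)$ that carries $\textit{RS\/}$ onto itself and therefore interchanges ambient joins and meets; hence it is enough to prove closure under arbitrary joins, and closure under meets follows by conjugating with $c$. So I fix a family $(X_i^\DOWN,X_i^\UP)\in\textit{RS\/}$, $i\in I$, set $A=\bigcup_{i}X_i^\DOWN$ and $B=\bigcup_{i}X_i^\UP$, and aim to construct a single set $Z\subseteq U$ with $\mathcal{A}(Z)=(A,B)$, i.e. $Z^\DOWN=A$ and $Z^\UP=B$.

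First I would record the ambient facts about $\mathcal{T}^\DOWN$. Since $^\DOWN$ is its interior and $^\UP$ its closure operator, $A$ is a union of open sets and hence open ($A^\DOWN=A$), while $B$ is a union of closed sets and hence closed in the Alexandrov topology ($B^\UP=B$); moreover $A\subseteq B$ because $R$ is reflexive, so $X_i^\DOWN\subseteq X_i^\UP$ for each $i$. The upper coordinate will be the easy one: if I keep $Z\subseteq B$ and make $Z$ cofinal in $B$, then $Z^\UP=B^\UP=B$. The whole difficulty, and the reason the theorem is nontrivial, is that lower approximations do not add up under union, so $Z=\bigcup_i X_i$ fails; I must instead carve out $Z$ so that its interior is exactly $A$ while it stays cofinal in $B$.

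The device that makes this possible is the \emph{interior of the boundary}
\[
N=(B\setminus A)^\DOWN=\{x\in U\mid R(x)\subseteq B\setminus A\},\qquad
Z=A\cup N_1\cup Q,\quad Q=\{x\in B\setminus A\mid R(x)\not\subseteq B\},
\]
which is open, so $R(x)\subseteq N$ for every $x\in N$. The key claim is that each $x\in N$ has $|R(x)|\ge 2$: if $R(x)=\{x\}$, then $x\in B$ gives $x\in X_j^\UP$ for some $j$, whence $x\in X_j$ and so $x\in X_j^\DOWN\subseteq A$, contradicting $x\in B\setminus A$. Thus in the transitive relation $R$ restricted to $N$ every element has at least two successors, all lying in $N$, and Theorem~\ref{Thm:Stone} supplies a partition $N=N_1\cup N_2$ into two sets each cofinal in $N$. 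This application of Stone's theorem to $N$ is the main obstacle: everything hinges on simultaneously providing, for each boundary point that stays inside $B$, both a successor in $Z$ (for cofinality) and a successor outside $Z$ (to prevent it from becoming interior), and the two cofinal halves $N_1,N_2$ deliver exactly this.

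The verification is then routine case-checking. For $Z^\UP=B$: one has $Z\subseteq B$, and a point $y\in B$ either has $R(y)\cap A\neq\emptyset$ (a successor in $A\subseteq Z$), or else $y\in B\setminus A$ with $R(y)\cap A=\emptyset$, in which case either $R(y)\subseteq B\setminus A$ so $y\in N$ and $R(y)$ meets $N_1\subseteq Z$, or $R(y)\not\subseteq B$ so $y\in Q\subseteq Z$; hence $Z$ is cofinal in $B$. For $Z^\DOWN=A$: the inclusion $A\subseteq Z^\DOWN$ holds since $A$ is open and $A\subseteq Z$, and for the converse I check that every $x\in B\setminus A$ satisfies $R(x)\not\subseteq Z$ --- if $R(x)\not\subseteq B$ this is immediate as $Z\subseteq B$; if $R(x)\subseteq B\setminus A$ then $x\in N$ and $R(x)$ meets $N_2$, which is disjoint from $Z$; and if $R(x)\subseteq B$ with $R(x)\cap A\neq\emptyset$ then $x$ itself lies in $B\setminus A$ but in none of $A,N_1,Q$, so $x\in R(x)\setminus Z$. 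Since $Z^\DOWN\subseteq Z\subseteq B$ meets $B\setminus A$ nowhere, $Z^\DOWN\subseteq A$, giving $\mathcal{A}(Z)=(A,B)$ and completing the join case.
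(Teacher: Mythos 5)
Your proposal is correct, and its core --- the join construction --- is essentially the paper's own: your $B\setminus A$ is the paper's boundary $S=\bigl(\bigcup_i X_i\bigr)^\UP\setminus\bigcup_i X_i^\DOWN$, your $N$ is the paper's $S^\DOWN$, your $Q$ is the paper's $H$, and your $Z$ coincides with the paper's $V$, the role of $N_1$ being played by one of the two disjoint cofinal subsets of $S^\DOWN$ extracted from Theorem~\ref{Thm:Stone}; both verifications use the second cofinal set in the same way, as a reservoir of successors lying outside the constructed set. Where you genuinely depart from the paper is the meet case. The paper runs a second, independent construction: for the boundary $T=\bigcap_i X_i^\UP\setminus\bigl(\bigcap_i X_i\bigr)^\UP$ it again produces two disjoint cofinal subsets of $T^\DOWN$ and shows that $\bigl(\bigcap_i X_i\bigr)\cup(T\setminus A')$, where $A'$ is one of them (so here a cofinal set is \emph{removed} rather than added), has the required approximations. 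You instead observe that $(A,B)\mapsto(B^c,A^c)$ is an order-reversing involution of all of $\wp(U)\times\wp(U)$ that restricts to the de~Morgan map $c$ on $\textit{RS\/}$, hence maps $\textit{RS\/}$ onto itself and interchanges arbitrary coordinatewise unions and intersections, so closure under joins yields closure under meets for free. This reduction is valid and halves the work; it is notable that the paper already has the ingredients (the map $c$ and Proposition~\ref{Prop:SelfDual}) but never exploits them this way. Two small points you should make explicit, neither of which is a gap: the degenerate case $N=\emptyset$, where Theorem~\ref{Thm:Stone} should not be invoked and one simply takes $N_1=N_2=\emptyset$ (the paper treats this case separately), and the one-line check that $N_2\cap Q=\emptyset$ (every point of $N$ has all its successors inside $B$, while no point of $Q$ does), on which your assertion that $N_2$ is disjoint from $Z$ rests.
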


\begin{proof} To prove that $\mathcal{RS}$ is a complete sublattice of 
$\wp(U) \times \wp(U)$, it suffices to show that for any 
family  $\{ \mathcal{A}(X_{i}) \mid i \in I \} = 
\{ \left( X_{i}^\DOWN,X_{i}^\UP\right) \mid i\in I \}$ of rough sets, the pairs 
$\left( \bigcap_{i \in I} X_{i}^\DOWN, \bigcap_{i \in I} X_{i}^{\UP}\right)$ 
and $\left( \bigcup_{i \in I} X_{i}^\DOWN, \bigcup_{i \in I} X_{i}^{\UP}\right)$ 
also are rough sets.

\medskip

(1) First, we construct a set $W \subseteq U$ that satisfies
$W^\DOWN = \bigcap_{i \in I} X_{i}^\DOWN$ and $W^\UP= \bigcap_{i \in I} X_{i}^\UP$.
Let us consider the set 
\[ 
Z = \bigcap_{i \in I}  X_{i}^\UP \setminus \Big ( \bigcap_{i\in I} X_{i} \Big )^\UP,
\]
and observe that for any $a\in Z$, we have $\left\vert R(a)\right\vert \geq 2$.
Indeed, by the definition of $Z$, $a\in Z$ means that  $R(a)\cap X_{i} \neq \emptyset $ for all $i\in I$
and $R(a) \cap\left( \bigcap_{i \in I} X_{i} \right) = \emptyset$.
If $R(a)$ has the form $R(a)=\{a\}$, then $R(a)\cap X_{i} \neq \emptyset$
implies  $a\in X_{i}$ for each $i\in I$, that is,  $a \in \bigcap_{i \in } X_{i}$, a
contradiction.

Let us first consider the case $Z^\DOWN \neq \emptyset$. Note that each successor
of any $a \in Z^\DOWN$ is also in $Z^\DOWN$, because if $b$ is a successor of $a$,
then $a \inR{R} b$ implies $R(b) \subseteq R(a) \subseteq Z$, that is, $b \in Z^\DOWN$.
This then means that each $a \in Z^\DOWN$ has at least two successors in $Z^\DOWN$,
and we may apply Theorem~\ref{Thm:Stone} to the set $Z^\DOWN$  with the
relation $R' = R \cap (Z^\DOWN \times Z^\DOWN)$. So, there exist two disjoint
sets $A,B \subseteq Z^\DOWN$ that are cofinal in the quasiordered set $(Z^\DOWN,R')$.
But since $R' \subseteq R$, $A$ and $B$ are cofinal in $Z^\DOWN$ with respect
to the original relation $R$. This gives $Z^\DOWN \subseteq A^\UP$ and
$Z^\DOWN \subseteq B^\UP$. In the case $Z^\DOWN = \emptyset$, we set $A = B = \emptyset$.

Let us define the set
\[
W = \Big ( \bigcap_{ i \in I} X_{i} \Big ) \cup  (Z \setminus A  ).
\]
We will show that $W^\DOWN = \bigcap_{i \in I} X_{i}^\DOWN$ and $W^\UP = \bigcap_{i \in I} X_{i}^\UP$.

Since $\bigcap_{i \in I} X_{i} \subseteq W$,  
$\bigcap_{i \in I} X_{i}^\DOWN = \left( \bigcap_{i \in I} X_{i}\right) ^\DOWN \subseteq W^\DOWN$. 
In order to prove the converse inclusion  $W^\DOWN \subseteq \bigcap_{i \in I} X_{i}^\DOWN$, 
suppose that $a\in W^\DOWN$.  Then, $R(a)\subseteq W = \left( \bigcap_{i \in I} X_{i}\right) \cup (Z \setminus A)$, 
and next we will prove that necessarily $R(a)\cap (Z \setminus A) = \emptyset$.

Indeed, if $R(a) \cap (Z \setminus A) \neq \emptyset$, then there is 
$b \in R(a) \cap (Z \setminus A)$. So, $R(b) \subseteq R(a) \subseteq W$, and
$b \in Z$ implies $R(b) \cap \left (   \bigcap_{i \in I} X_{i} \right ) = \emptyset$.
Therefore, we must have $R(b) \subseteq Z \setminus A$. In the case $Z^\DOWN = \emptyset$,
this implies a contradiction. If $Z^\DOWN \neq \emptyset$, then from the fact that
$A$ is cofinal in $Z^\DOWN$, we get $b \in Z^\DOWN \subseteq A^\UP$. On the other hand,
$R(b) \subseteq (Z \setminus A)$ implies $R(b) \cap A = \emptyset$ contradicting
$b \in A^\UP$. Hence, we have $R(a) \cap (Z \setminus A) = \emptyset$.

The facts $R(a)\cap (Z \setminus A) = \emptyset$  and  $R(a)\subseteq W$ imply $R(a)\subseteq \bigcap_{i \in I} X_{i}$, 
that is, $a \in \left( \bigcap_{i\in I} X_i \right )^\DOWN = \bigcap_{i\in I} X_{i}^\DOWN$. 
Therefore, we obtain $W^\DOWN = \bigcap_{i\in I} X_{i}^\DOWN$.

To conclude part (1), let us show the equality $W^\UP = \bigcap_{i \in I} X_i^\UP$.
Obviously, $\bigcap_{i\in I}X_i \subseteq \bigcap_{i\in I} X_i^\UP$ and 
$(Z \setminus A) \subseteq \bigcap_ {i \in I} X_i^\UP$ imply $W \subseteq \bigcap_{i\in I} X_{i}^\UP$. 
Since Alexandrov topologies are closed also with respect to arbitrary intersections,
$\bigcap_{i\in I} X_{i}^\UP$ belongs to $\mathcal{T}^\UP$, and therefore
$W^\UP \subseteq \left ( \bigcap_{i\in I} X_{i}^\UP \right )^\UP = \bigcap_{i\in I} X_{i}^\UP$.

Conversely, we prove $\bigcap_{i\in I} X_{i}^\UP \subseteq W^\UP$ by showing first that 
$Z \subseteq (Z \setminus A)^\UP$. For that, take any $z \in Z$. Clearly, we may now assume $z \in A$,
because otherwise there is nothing left to prove. Because $A$ is cofinal in $Z^\DOWN$,
we have $A \subseteq Z^\DOWN \subseteq B^\UP$. In addition, $B \subseteq (Z \setminus A)$
implies $B^\UP \subseteq (Z \setminus A)^\UP$. Thus, $z \in A \subseteq (Z \setminus A)^\UP$
proving $Z \subseteq (Z \setminus A)^\UP$. Therefore, we may write:

\begin{align*}
\bigcap_{i\in I} X_{i}^\UP 
  & = \Big ( \bigcap_{i\in I} X_i \Big )^\UP \cup \Big( \bigcap_{i\in I} X_{i}^\UP 
       \setminus \Big ( \bigcap_{i\in I} X_{i} \Big )^\UP \Big ) \\
  & = \Big ( \bigcap_{i\in I} X_{i} \Big )^\UP \cup Z \\
  & \subseteq \Big ( \bigcap_{i\in I} X_{i} \Big )^\UP \cup (Z \setminus A)^\UP \\
  & \subseteq W^\UP.
\end{align*}
This proves $W^\UP = \bigcap_{i \in I} X_{i}^\UP$. 
Hence, $(W^\DOWN,W^\UP) = \left( \bigcap_{i\in I} X_i^\DOWN,\bigcap_{i\in I} X_{i}^\UP \right)$ is a rough set.

\medskip

(2) We will prove that $\left( \bigcup_{i\in I} X_{i}^\DOWN,\bigcup_{i\in I} X_{i}^\UP \right)$ 
is a rough set. This is done by constructing a set $V\subseteq U$ such that
$V^\DOWN = \bigcup_{i\in I} X_{i}^\DOWN$ and $V^\UP = \bigcup_{i\in I}X_{i}^\UP$.
Let us first consider the set 
\[
S = \Big ( \bigcup_{i\in I} X_{i} \Big )^\UP \setminus \Big ( \bigcup_ {i\in I} X_{i} ^\DOWN \Big) 
\]
and observe that for each $b\in S$, $| R(b) | \geq 2$. Indeed, if we suppose that 
$R(b)$ has only one element, that is, $R(b)=\{b\}$, then 
$b \in \left( \bigcup_{i\in I} X_{i} \right) ^\UP = \bigcup_{i\in I} X_{i}^\UP$ 
implies $R(b) \cap X_k \neq \emptyset$ for some $k \in I$, that is $b\in X_k$. 
However, in this case $\{b\} = R(b)\subseteq X_k$ implies $b\in$ $X_k^\DOWN$, a contradiction.

Let us first assume that $S^\DOWN \neq \emptyset$. As in case (1), we note that
every $a \in S^\DOWN$ has at least two successors in $S^\DOWN$ and we may deduce that
there exists two disjoint cofinal subsets $A$ and $B$ of $S^\DOWN$. This means
that $S^\DOWN \subseteq A^\UP$ and $S^\DOWN \subseteq B^\UP$. Furthermore, if $S^\DOWN = \emptyset$,
we set $A = B = \emptyset$.

Let us define the sets $H$ and $V$ by
\begin{align*}
H & = \Big \{ a \in S \mid R(a) \nsubseteq \Big ( \bigcup_{i\in I} X_{i} \Big )^\UP \Big \}, \\ 
V     & = \Big( \bigcup_{i\in I} X_i^\DOWN \Big ) \cup H \cup A.
\end{align*}
Observe that $V \subseteq \left( \bigcup_{i\in I} X_{i} \right )^\UP$. This is because 
$\bigcup_{i\in I} X_{i}^\DOWN \subseteq \bigcup_ {i\in I} X_{i}\subseteq \left( \bigcup_ {i\in I} X_{i} \right)^\UP$
and $A, H \subseteq S \subseteq \left( \bigcup_{i\in I} X_{i}\right) ^\UP$ by definition. 

\medskip

Now, we will prove that 
\[ 
V^\DOWN = \bigcup_{i\in I} X_{i}^\DOWN \mbox{ \quad and \quad } V^\UP = \bigcup_{i\in I} X_{i}^\UP.
\]
Indeed, we have $X_{i}^\DOWN \subseteq V$ for all $i\in I$ by the definition of $V$. 
Therefore, for all $i \in I$, $X_{i}^{\DOWN} = X_{i}^{\DOWN\DOWN} \subseteq V^{\DOWN }$, which gives 
\[ \bigcup_{i\in I} X_{i}^\DOWN \subseteq V^\DOWN.\] To show the converse, 
suppose that $a\in V^\DOWN$. Then, 
\[
 a \in R(a) \subseteq V = \Big ( \bigcup_{i\in I} X_{i}^\DOWN \Big ) \cup H \cup A.
\]
Observe that $a \in H$ is not possible, since
$R(a) \subseteq V \subseteq \left (\bigcup_{i \in I} X_i \right )^\UP$.
Next we will show that $a \in A$ is also excluded.

Indeed, if $a \in A$, then necessarily $S^\DOWN \neq \emptyset$. The inclusions
$A \subseteq S^\DOWN \subseteq B^\UP$ imply $a \in B^\UP$.
This means that there exists an element $b \in B$ with $a \inR{R} b$ and
$b \in R(a) \subseteq \left ( \bigcup_{i \in I} X_i^\DOWN \right ) \cup H \cup A$.
Option  $b \in A$ is not possible, because $A \cap B = \emptyset$.
Also $b \in H$ is impossible, because 
$R(b) \subseteq R(a) \subseteq \left (\bigcup_{i \in I} X_i \right )^\UP$.
Finally, the remaining case $b \in  \bigcup_{i \in I} X_i^\DOWN$ contradicts with
$b \in B \subseteq S = \left ( \bigcup_{i\in I} X_{i} \right )^\UP \setminus \left ( \bigcup_ {i\in I} X_{i} ^\DOWN \right)$. 

Because we showed that $a \in H$ and $a \in A$ are not possible, we have that $a \in \bigcup_{i\in I} X_{i}^\DOWN$.
Therefore, the inclusion  $V^\DOWN  \subseteq \bigcup_{i\in I} X_{i}^\DOWN$ is verified and
we may write $V^{\DOWN } = \bigcup_{i\in I} X_{i}^\DOWN$.

\medskip

Let us prove now the equality $V^{\UP } = \bigcup_{i\in I} X_{i}^\UP$.
Because $V\subseteq \left( \bigcup_{i\in I} X_{i} \right)^\UP$, we have
\[
V^\UP \subseteq \Big ( \bigcup_{i\in I} X_{i} \Big ) ^{\UP \UP } =
\Big ( \bigcup_{i\in I} X_{i} \Big )^\UP =
\bigcup_{i\in I} X_{i}^\UP .
\]
To prove the reverse inclusion, suppose that $a\in \bigcup_{i\in I} X_{i}^{\UP }$. 
Then, there exists $k \in I$ such that $a \in X_k^\UP$, and this implies 
$R(a) \cap X_k \neq \emptyset$. If $R(a)\cap \left( \bigcup_{i\in I} X_{i}^\DOWN \right) 
\neq \emptyset $ holds, then $ a \in \left ( \bigcup_{i\in I} X_{i}^\DOWN \right )^\UP
\subseteq V^{\UP}$, and the proof is completed. Therefore, we now assume 
$R(a) \cap \left( \bigcup_{i\in I} X_{i}^\DOWN \right) =\emptyset$, that is,
\[
a \in \Big ( \bigcup_{i\in I} X_{i} \Big)^\UP \setminus \Big ( \bigcup_{i\in I} X_{i}^\DOWN \Big ) = S.
\]
If $R(a)\nsubseteq \left ( \bigcup_{i\in I} X_{i} \right)^\UP$, 
then $a\in H \subseteq V \subseteq V^\UP$ and the proof is again completed.
Hence, we restrict ourselves to the case $R(a)\subseteq \left ( \bigcup_{i\in I} X_{i}\right )^\UP$.

However, in this case we get  $R(a)\subseteq S$, because we have 
$R(a) \cap \left( \bigcup_{i\in I} X_{i}^\DOWN \right) =\emptyset$
by our hypothesis. Hence, we get $a \in S^\DOWN$ and therefore $S^\DOWN \neq \emptyset$.
Then, $S^\DOWN \subseteq A^\UP$, because $A$ is cofinal in $S^\DOWN$.
Since $A \subseteq V$, we conclude that $a \in A^\UP \subseteq V^\UP$.
This implies
\[
\bigcup_{i\in I} X_{i}^\UP = V^\UP.
\]
Hence, $\left( \bigcup_{i\in I} X_{i}^\DOWN, \bigcup_{i\in I} X_{i}^\UP \right)$ is a rough set, 
which completes the proof.
\end{proof}

The next corollary describes the meets and the joins in the complete lattice $\mathcal{RS}$.

\begin{corollary} \label{Cor:MeetsJoins}
If $R$ is a quasiorder on a non-empty set $U$, then
$\mathcal{RS}$ is a completely distributive complete lattice such that 
\[
 \bigwedge_{i\in I} \mathcal{A}(X_{i}) = \Big ( \bigcap_{i\in I} X_{i}^\DOWN, \bigcap_{i\in I} X_{i}^\UP \Big )
\mbox{ \quad and \quad }
 \bigvee_{i\in I} \mathcal{A}(X_{i}) = \Big ( \bigcup_{i\in I} X_{i}^\DOWN, \bigcup_{i\in I} X_{i}^\UP \Big )
\]
for all $\{ \mathcal{A}(X_{i}) \mid i \in I \} \subseteq \textit{RS\/}$.
\end{corollary}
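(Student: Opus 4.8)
The plan is to derive everything from Theorem~\ref{Thm:Main}, which already establishes that $\mathcal{RS}$ is a complete sublattice of $\wp(U) \times \wp(U)$. Recall that in a coordinatewise-ordered product $\wp(U) \times \wp(U)$, arbitrary meets and joins are computed componentwise: for a family $\{(A_i, B_i) \mid i \in I\}$ one has $\bigwedge_{i \in I}(A_i,B_i) = (\bigcap_{i \in I} A_i, \bigcap_{i \in I} B_i)$ and $\bigvee_{i \in I}(A_i,B_i) = (\bigcup_{i \in I} A_i, \bigcup_{i \in I} B_i)$. Applying this to the family $\{(X_i^\DOWN, X_i^\UP) \mid i \in I\}$, the meet and join taken in $\wp(U) \times \wp(U)$ are exactly $(\bigcap_{i \in I} X_i^\DOWN, \bigcap_{i \in I} X_i^\UP)$ and $(\bigcup_{i \in I} X_i^\DOWN, \bigcup_{i \in I} X_i^\UP)$. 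Because $\mathcal{RS}$ is a \emph{complete} sublattice, these pairs lie in $\textit{RS\/}$ and coincide with the meet $\bigwedge_{i \in I}\mathcal{A}(X_i)$ and the join $\bigvee_{i \in I}\mathcal{A}(X_i)$ computed inside $\mathcal{RS}$. This yields the two displayed formulas at once.

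It remains to establish complete distributivity. First I would note that the power set $\wp(U)$ is a complete Boolean lattice and hence completely distributive, and that a direct product of completely distributive lattices is again completely distributive; consequently $\wp(U) \times \wp(U)$ is completely distributive. The remaining step is to transfer complete distributivity from the ambient lattice to the complete sublattice $\mathcal{RS}$. The complete distributivity identity
\[
\bigwedge_{j \in J} \bigvee_{k \in K_j} z_{j,k} = \bigvee_{f} \bigwedge_{j \in J} z_{j, f(j)},
\]
where $f$ ranges over all choice functions in $\prod_{j \in J} K_j$, involves only arbitrary meets and joins. Since $\mathcal{RS}$ is a complete sublattice of $\wp(U) \times \wp(U)$, every meet and every join occurring in this identity produces the same element whether evaluated in $\mathcal{RS}$ or in $\wp(U) \times \wp(U)$. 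As the identity holds in $\wp(U) \times \wp(U)$ for every family $\{z_{j,k}\} \subseteq \textit{RS\/}$, it therefore holds in $\mathcal{RS}$ as well, so $\mathcal{RS}$ is completely distributive.

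The main obstacle is precisely this last transfer argument: one must use that ``complete sublattice'' is meant in the strong sense that \emph{all} infinitary meets and joins, not merely the finitary ones, agree with those of the ambient lattice, so that the inner joins $\bigvee_{k} z_{j,k}$, the outer meet $\bigwedge_{j}$, and their counterparts on the right-hand side may all be read off from $\wp(U) \times \wp(U)$. This is exactly what Theorem~\ref{Thm:Main} supplies, and it is why no direct verification of the distributive law for rough sets is required.
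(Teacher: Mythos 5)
Your proposal is correct and takes essentially the same approach as the paper: the paper's proof likewise observes that $\mathcal{RS}$ is a complete sublattice of the completely distributive lattice $\wp(U) \times \wp(U)$, so the meets and joins of $\{\mathcal{A}(X_i) \mid i \in I\}$ coincide with the componentwise ones in the product (yielding the two formulas), and complete distributivity is inherited. You merely spell out in more detail the componentwise computation and the transfer of the choice-function distributivity identity, both of which the paper leaves implicit.
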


\begin{proof} As $\mathcal{RS}$ is a complete sublattice of the completely distributive 
lattice $\wp(U)\times \wp(U)$, it is always completely distributive. Because the meet
and the join of $\{ \mathcal{A}(X_i) \mid i \in I\}$ in $\mathcal{RS}$ coincide with their 
meet and join in the lattice $\wp(U)\times \wp(U)$, we obtain the required formulas.
\end{proof}

\section{Complementation in the Lattice of Rough Sets}  \label{Sec:Complements}

In the previous section, we showed that for any quasiorder $R$, $\mathcal{RS}$ is a
completely distributive complete lattice. In this section, we describe
different types of complementation operations in $\mathcal{RS}$. 

Let us first consider the mapping
\[
c \colon \textit{RS\/} \to \textit{RS\/}, \, \mathcal{A}(X) \mapsto \mathcal{A}(X^c)
\]
introduced already in Section~\ref{Sec:TWO}. Now, for all $\alpha,\beta \in \textit{RS\/}$, we have
\begin{eqnarray*}
c(\alpha \vee \beta) & = & c(\alpha) \wedge c(\beta) \\
c(\alpha \wedge \beta) & = & c(\alpha) \vee c(\beta) \\
c(c(\alpha)) & = & \alpha
\end{eqnarray*}
This means that $c$ is so-called \emph{de~Morgan operation} on the lattice $\mathcal{RS}$
(for the notion, see \cite{BlythVarlet94}, for instance). 
Note that $\alpha \vee c(\alpha) = (U,U)$ and $\alpha \wedge c(\alpha) =
(\emptyset,\emptyset)$ do not generally hold. However, 
$X^\DOWN \cap X^{c \DOWN} = (X \cap X^c)^\DOWN = \emptyset^\DOWN = \emptyset$ and 
$X^\UP \cup X^{c \UP} = (X \cup X^c)^\UP = U^\UP = U$ for any $X \subseteq U$.

Let $(L,\leq)$ be a lattice with a least element $0$. An element $x^*$ is
a \emph{pseudocomplement} of $x \in L$, if $x \wedge x^* = 0$ and for all $a
\in L$, $x \wedge a = 0$ implies $a \leq x^*$. An element can have at most
one pseudocomplement. A lattice is \emph{pseudocomplemented} if each element
has a pseudocomplement.

Since any completely distributive complete lattice is both pseudocomplemented and
dually pseudocomplemented, we may write the following result by Corollary~\ref{Cor:MeetsJoins}.

\begin{proposition} \label{Prop:DuallyPseudo}
If $R$ is a quasiorder on a non-empty set $U$, then
both $\mathcal{RS}$ and its dual $\mathcal{RS}^\mathrm{op}$ are
pseudocomplemented lattices.
\end{proposition}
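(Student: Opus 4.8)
The plan is to reduce the statement to a single general order-theoretic fact, namely that every completely distributive complete lattice is pseudocomplemented, and then to dispose of the dual by symmetry. By Corollary~\ref{Cor:MeetsJoins}, $\mathcal{RS}$ is a completely distributive complete lattice (with least element $\mathcal{A}(\emptyset)$), so once the general fact is in hand both halves follow immediately.

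First I would fix a completely distributive complete lattice $(L,\leq)$ with least element $0$ and, for an arbitrary $x \in L$, propose as the candidate pseudocomplement
\[
x^* = \bigvee \{ a \in L \mid x \wedge a = 0 \}.
\]
The second defining property of a pseudocomplement is then immediate: if $x \wedge a = 0$, then $a$ lies in the indexing set of the join, so $a \leq x^*$. The whole content sits in the first property, $x \wedge x^* = 0$. Here I would invoke the infinite distributive law $x \wedge \bigvee S = \bigvee \{ x \wedge s \mid s \in S \}$, which holds in any completely distributive complete lattice, and apply it to $S = \{ a \mid x \wedge a = 0 \}$ to obtain
\[
x \wedge x^* = \bigvee \{ x \wedge a \mid x \wedge a = 0 \} = \bigvee \{ 0 \} = 0.
\]
Thus $x^*$ is the pseudocomplement of $x$, and since $x$ was arbitrary, $L$ is pseudocomplemented; specializing to $L = \mathcal{RS}$ shows $\mathcal{RS}$ is pseudocomplemented.

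For the dual I have two routes. Cheaply, Proposition~\ref{Prop:SelfDual} gives $\mathcal{RS} \cong \mathcal{RS}^\mathrm{op}$, so pseudocomplementedness transfers across this order-isomorphism and $\mathcal{RS}^\mathrm{op}$ is pseudocomplemented as well. Alternatively, I could note that the order-dual of a completely distributive complete lattice is again completely distributive and complete, so the very same argument (with joins and meets interchanged) applies verbatim to $\mathcal{RS}^\mathrm{op}$.

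I expect no genuine obstacle in this proof: the one point that warrants care is citing the correct distributive identity, namely that complete distributivity supplies the frame law $x \wedge \bigvee S = \bigvee_{s \in S}(x \wedge s)$ used in the computation of $x \wedge x^*$. Everything else is purely formal manipulation of the definitions.
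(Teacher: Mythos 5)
Your proposal is correct and takes essentially the same route as the paper: the paper also derives the proposition in one line from Corollary~\ref{Cor:MeetsJoins} together with the general fact that any completely distributive complete lattice is pseudocomplemented and dually pseudocomplemented, which it cites without proof and you verify via the standard frame argument $x^* = \bigvee\{a \mid x \wedge a = 0\}$ using the join-infinite distributive law. Both of your routes for the dual (self-duality of complete distributivity, or the order-isomorphism of Proposition~\ref{Prop:SelfDual}) are sound.
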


In the next proposition we will determine pseudocomplement operation.

\begin{proposition}
\label{Prop:Pseudo} 
In the lattice $\mathcal{RS}$, for each $X \subseteq U$, we have 
\[
\mathcal{A}(X)^* = \mathcal{A}(X^{\UP \Up c}).
\]
\end{proposition}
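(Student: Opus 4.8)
The plan is to verify directly that $\beta = \mathcal{A}(X^{\UP \Up c})$ satisfies the two defining conditions for the pseudocomplement of $\alpha = \mathcal{A}(X) = (X^\DOWN, X^\UP)$: namely $\alpha \wedge \beta = (\emptyset, \emptyset)$, and $\alpha \wedge \mathcal{A}(Y) = (\emptyset, \emptyset)$ implies $\mathcal{A}(Y) \leq \beta$ for every $Y \subseteq U$. Here $(\emptyset, \emptyset)$ is the least element of $\mathcal{RS}$, since $\emptyset^\DOWN = \emptyset$ as $R$ is reflexive. By Corollary~\ref{Cor:MeetsJoins} the meet is coordinatewise, so $\alpha \wedge \mathcal{A}(Y) = (X^\DOWN \cap Y^\DOWN, X^\UP \cap Y^\UP)$. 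The first observation I would record is that, since $X^\DOWN \subseteq X^\UP$ and $Y^\DOWN \subseteq Y^\UP$, the lower coordinate is forced to be empty as soon as the upper one is; hence $\alpha \wedge \mathcal{A}(Y) = (\emptyset, \emptyset)$ is equivalent to the single condition $X^\UP \cap Y^\UP = \emptyset$. This reduces everything to bookkeeping with upper approximations inside the Alexandrov topologies $\mathcal{T}^\UP$ and $\mathcal{T}^\DOWN$.

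Next I would compute the approximations of the candidate. Duality of $^\UP$ and $^\DOWN$ gives $(W^c)^\UP = (W^\DOWN)^c$ for every $W$; applying this with $W = X^{\UP \Up}$ and using $X^{\UP \Up \DOWN} = X^{\UP \Up}$ (which is the identity $Z^{\Up \DOWN} = Z^\Up$ applied to $Z = X^\UP$) yields $(X^{\UP \Up c})^\UP = X^{\UP \Up c}$. In topological terms this merely says that $X^{\UP \Up c}$, being the complement of the $\mathcal{T}^\UP$-closed set $X^{\UP \Up}$, is $\mathcal{T}^\UP$-open and hence fixed by the smallest-neighbourhood operator $^\UP$. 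Orthogonality of $\beta$ to $\alpha$ is then immediate: since the closure operator $^\Up$ is extensive we have $X^\UP \subseteq X^{\UP \Up}$, so $X^\UP \cap X^{\UP \Up c} = \emptyset$, which by the reduction above gives $\alpha \wedge \beta = (\emptyset, \emptyset)$.

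For maximality, assume $X^\UP \cap Y^\UP = \emptyset$, that is, $Y^\UP \subseteq (X^\UP)^c = X^{\UP c}$. The step I expect to be the main obstacle is precisely here: this inclusion by itself is too weak, because $X^{\UP c}$ properly contains $X^{\UP \Up c}$ in general, so one must use that $Y^\UP$ is $\mathcal{T}^\UP$-open. As $^\Down$ is the $\mathcal{T}^\UP$-interior, the largest $\mathcal{T}^\UP$-open subset of $X^{\UP c}$ is $X^{\UP c \Down}$, and the topological identity ``interior equals complement of closure of complement'' in $\mathcal{T}^\UP$ gives $X^{\UP c \Down} = X^{\UP \Up c}$. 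Since $Y^\UP$ is open and contained in $X^{\UP c}$, it follows that $Y^\UP \subseteq X^{\UP \Up c} = (X^{\UP \Up c})^\UP$, which settles the upper coordinate. For the lower one, $Y^\DOWN \subseteq Y^\UP \subseteq X^{\UP \Up c}$, and applying the monotone, idempotent interior $^\DOWN$ gives $Y^\DOWN = Y^{\DOWN \DOWN} \subseteq (X^{\UP \Up c})^\DOWN$. Thus $\mathcal{A}(Y) \leq \beta$, and together with the previous paragraph this identifies $\beta$ as the pseudocomplement $\mathcal{A}(X)^*$.
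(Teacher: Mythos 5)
Your proof is correct and follows essentially the same route as the paper's: both verify the two defining conditions of the pseudocomplement directly, using the same key identities $(X^{\UP\Up c})^\UP = X^{\UP\Up c}$, $X^\UP \subseteq X^{\UP\Up}$, and $X^{\UP c\Down} = X^{\UP\Up c}$. The only cosmetic difference is that in the maximality step you exploit the openness of $Y^\UP$ (i.e.\ $Y^{\UP\Down}=Y^\UP$) to bound $Y^\UP$ by the interior of $X^{\UP c}$, whereas the paper threads the inclusion through $Y \subseteq Y^{\UP\Down} \subseteq X^{\UP c\Down}$ and then applies the approximation operators to $Y$.
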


\begin{proof}
$\mathcal{A}(X) \wedge  \mathcal{A}(X^{\UP \Up c}) = (\emptyset,\emptyset)$, because 
$X^{\UP \Up c \UP} = X^{\UP \Up \DOWN c} = X^{\UP \Up c}$ and 
$X^\UP \cap X^{\UP \Up c} = \emptyset$ if and only if 
$X^\UP \subseteq X^{\UP \Up}$, and the latter holds trivially. 
For the other part, 
$X^\DOWN \cap X^{\UP \Up c \DOWN} \subseteq X^\UP \cap X^{\UP \Up c}
= \emptyset$.

On the other hand, if $\mathcal{A}(X) \wedge \mathcal{A}(Y) =
(\emptyset,\emptyset)$, then $X^\UP \cap Y^\UP = \emptyset$ and $Y^\UP
\subseteq X^{\UP c}$. This implies $Y \subseteq Y^{\UP
\Down} \subseteq X^{\UP c \Down} =
X^{\UP \Up c}$, from which we get $\mathcal{A}(Y) \leq \mathcal{A}(X^{\UP \Up c})$. 
Thus, $\mathcal{A}(X)^* = \mathcal{A}(X^{\UP \Up c})$.
\end{proof}

Notice that $\mathcal{RS}$ is not necessarily a Stone lattice. In the final
section, we give a condition under which $\mathcal{RS}$ is a Stone lattice.

We conclude this section by describing also dual pseudocomplements in $\mathcal{RS}$.
A \emph{dual pseudocomplement} $x^+$ of $x \in L$ in a lattice $(L,\leq)$ with a greatest
element $1$ is such that  $x \vee x^+ = 1$ and $x \vee y = 1$ implies $x^+ \leq y$
for all $y \in L$.

\begin{proposition}
\label{Prop:DualPseudo} 
In the lattice $\mathcal{RS}$, for each $X \subseteq U$, we have 
\[
 \mathcal{A}(X)^+ = \mathcal{A}(X^{\DOWN \Down c}). 
\]
\end{proposition}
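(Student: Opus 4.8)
The plan is to prove $\mathcal{A}(X)^+ = \mathcal{A}(X^{\DOWN \Down c})$ directly from the definition of dual pseudocomplement, exploiting the self-duality of $\mathcal{RS}$ established in Proposition~\ref{Prop:SelfDual} together with the already-proven formula for the pseudocomplement in Proposition~\ref{Prop:Pseudo}. Since the formula for $\mathcal{A}(X)^+$ is the exact order-dual of the formula for $\mathcal{A}(X)^*$, the cleanest route is to mirror the proof of Proposition~\ref{Prop:Pseudo}, replacing meets by joins, the bottom element by the top element, and each approximation operator by its dual. Concretely, I would verify the two defining conditions: first that $\mathcal{A}(X) \vee \mathcal{A}(X^{\DOWN \Down c}) = (U,U)$, and second that $\mathcal{A}(X) \vee \mathcal{A}(Y) = (U,U)$ forces $\mathcal{A}(X^{\DOWN \Down c}) \leq \mathcal{A}(Y)$.

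For the first condition, by Corollary~\ref{Cor:MeetsJoins} the join is computed coordinatewise, so I must check $X^\DOWN \cup X^{\DOWN \Down c \DOWN} = U$ and $X^\UP \cup X^{\DOWN \Down c \UP} = U$. Using the quasiorder identities collected at the end of Section~\ref{Sec:TWO}, in particular $X^{\Down \UP} = X^\Down$ and the interior/closure relations, the key computation is that $X^{\DOWN \Down c}$ is itself closed (open for the dual topology), so its upper approximation collapses: $X^{\DOWN \Down c \UP} = X^{\DOWN \Down c}$ analogously to how $X^{\UP\Up c}$ behaved in the previous proof. Then the top-coordinate equality $X^\DOWN \cup X^{\DOWN \Down c} = U$ reduces to the trivial inclusion $X^{\DOWN \Down} \subseteq X^\DOWN$, which holds since $^\Down$ is an interior operator. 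The upper-coordinate equality follows because $X^\DOWN \subseteq X^\UP$ together with the lower-coordinate identity already gives the union equal to $U$.

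For the minimality condition, I would assume $\mathcal{A}(X) \vee \mathcal{A}(Y) = (U,U)$, which by the coordinatewise join gives $X^\DOWN \cup Y^\DOWN = U$, hence $Y^\DOWN \supseteq X^{\DOWN c}$. Applying the closure operator $^\Up$ and using $X^{\DOWN c} = X^{c\Up}$ along with $X^{\Up\DOWN} = X^\Up$ in the reverse direction, I can push this inclusion through the dual approximations exactly as the pseudocomplement proof pushed $Y^\UP \subseteq X^{\UP c}$ through $^\Down$. This yields $X^{\DOWN \Down c} \subseteq Y^{\DOWN \Up} = Y^\DOWN$ and correspondingly on the upper coordinate, giving $\mathcal{A}(X^{\DOWN \Down c}) \leq \mathcal{A}(Y)$.

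The main obstacle, as in Proposition~\ref{Prop:Pseudo}, is bookkeeping the chain of superscript identities correctly: one must apply the right quasiorder rule ($X^{\UP\Down}=X^\UP$, $X^{\Up\DOWN}=X^\Up$, $X^{\DOWN\Up}=X^\DOWN$, $X^{\Down\UP}=X^\Down$) at each step and keep track of which operator is idempotent on which kind of set. A conceptually slicker alternative that avoids most of this is to invoke Proposition~\ref{Prop:SelfDual}: the self-dual order-isomorphism is realized by the de~Morgan map $c$, and under $c$ the dual pseudocomplement corresponds to the pseudocomplement, i.e. $\mathcal{A}(X)^+ = c\big(c(\mathcal{A}(X))^*\big)$. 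Since $c(\mathcal{A}(X)) = \mathcal{A}(X^c)$, substituting into the formula of Proposition~\ref{Prop:Pseudo} gives $\mathcal{A}(X)^+ = c\big(\mathcal{A}(X^{c\UP\Up c})\big) = \mathcal{A}(X^{c\UP\Up})$, and one final application of the duality relations $X^{c\UP} = X^{\DOWN c}$ and $X^{c\Up}=X^{\Down c}$ rewrites this as $\mathcal{A}(X^{\DOWN \Down c})$. I would present this duality argument as the primary proof, since it reduces the whole statement to the already-established Proposition~\ref{Prop:Pseudo} plus routine complementation identities, and keep the direct verification only as a sanity check.
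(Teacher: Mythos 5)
Your primary argument --- transferring Proposition~\ref{Prop:Pseudo} through the de~Morgan map $c$ --- is correct, and it is genuinely different from the paper's proof. The paper verifies the two defining conditions directly: for the join condition it uses the collapse identity $X^{\DOWN \Down c \DOWN} = X^{\DOWN \Down \UP c} = X^{\DOWN \Down c}$ together with $X^{\DOWN c} \subseteq X^{\DOWN \Down c}$ to get $X^\DOWN \cup X^{\DOWN \Down c} \supseteq X^\DOWN \cup X^{\DOWN c} = U$, and it settles the upper coordinate by $X^\UP \cup X^{\DOWN \Down c \UP} \supseteq X^\DOWN \cup X^{\DOWN \Down c} = U$; for minimality it pushes $X^{\DOWN c} \subseteq Y^\DOWN$ through $^\Up$ to obtain $X^{\DOWN \Down c} = X^{\DOWN c \Up} \subseteq Y^{\DOWN \Up} \subseteq Y$. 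Your route instead rests on the general fact that an order-reversing involution carrying $(U,U)$ to $(\emptyset,\emptyset)$ and satisfying the de~Morgan laws (all established for $c$ before this proposition) exchanges pseudocomplements and dual pseudocomplements, so that
\[
\mathcal{A}(X)^+ = c\bigl(c(\mathcal{A}(X))^*\bigr) = c\bigl(\mathcal{A}(X^{c\UP\Up c})\bigr)
= \mathcal{A}(X^{c\UP\Up}) = \mathcal{A}(X^{\DOWN\Down c}),
\]
the last step using $X^{c\UP} = X^{\DOWN c}$ and $Y^{c\Up} = Y^{\Down c}$ (the latter is the duality for $R^{-1}$, which the paper states only for $R$ but which holds verbatim). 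This buys real economy --- no new approximation computations at all --- at the small cost of stating and proving the transfer principle, which is routine.

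Your backup ``direct verification,'' however, contains a genuine error: the claimed collapse $X^{\DOWN \Down c \UP} = X^{\DOWN \Down c}$ is false in general. The set $X^{\DOWN \Down c} = X^{\DOWN c \Up}$ lies in $\mathcal{T}^\DOWN$, and $^\UP$ fixes exactly the members of $\mathcal{T}^\UP$, not of $\mathcal{T}^\DOWN$; the correct dualization of the identity used in Proposition~\ref{Prop:Pseudo} is $X^{\DOWN \Down c \DOWN} = X^{\DOWN \Down c}$, which is what the first coordinate actually requires. Concretely, let $U = \{a,b,c\}$ and let $R$ be the reflexive closure of $\{(a,b),(c,b)\}$, and take $X = \{a,b\}$: then $X^\DOWN = \{a,b\}$, $X^{\DOWN\Down} = \{a\}$, $X^{\DOWN\Down c} = \{b,c\}$, but $X^{\DOWN\Down c\UP} = U \neq X^{\DOWN\Down c}$, while $X^{\DOWN\Down c\DOWN} = \{b,c\}$ as the paper's identity predicts. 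The upper coordinate needs no collapse at all: it follows from monotonicity, $X^\UP \cup X^{\DOWN\Down c\UP} \supseteq X^\DOWN \cup X^{\DOWN\Down c} = U$. So present the duality argument as your proof and either fix the sketch along the paper's lines or drop it.
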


\begin{proof}
$\mathcal{A}(X) \vee \mathcal{A}(X^{\DOWN \Down c})  = (U,U)$, because 
$X^{\DOWN \Down c \DOWN} =
X^{\DOWN \Down \UP c} = X^{\DOWN \Down c}$, and 
$X^{\DOWN \Down} \subseteq X^\DOWN$ implies $X^{\DOWN c} 
\subseteq X^{\DOWN \Down c}$ and $X^\DOWN \cup
X^{\DOWN \Down c} \supseteq X^\DOWN \cup X^{\DOWN c} = U$. 
Similarly, $X^\UP \cup X^{\DOWN \Down c \UP} \supseteq X^\DOWN \cup X^{\DOWN \Down c} = U$.

If $\mathcal{A}(X) \vee \mathcal{A}(Y) = (U,U)$, then $X^\DOWN \cup Y^\DOWN
= U$ and $X^{\DOWN c} \subseteq Y^\DOWN$. This implies 
$X^{\DOWN \Down c} = X^{\DOWN c \Up} \subseteq Y^{\DOWN \Up} \subseteq Y$.
From this we directly obtain $\mathcal{A}(X^{\DOWN \Down c}) \leq \mathcal{A}(Y)$.
\end{proof}

\section{Completely Irreducible Elements} \label{Sec:JoinIrreducibles}

In this section, we find the set of completely join-irreducible elements
of $\mathcal{RS}$, and show how all elements can be represented as a join of
these. Also completely meet-irreducible elements are characterized.
For a complete lattice $L$, an element $x \in L$ is \emph{completely join-irreducible} 
if for every subset $S$ of $L$, $x = \bigvee S$ implies that $x \in S$; see \cite{Ran52}.

Notice that for any Alexandrov topology $\mathcal{T}$, the family
$\mathcal{B_T} = \{ N_\mathcal{T}(x) \mid x \in U \}$ of the neighbourhoods
of the points consists of the completely join-irreducible elements of
the complete lattice $(\mathcal{T},\subseteq)$. This means that for all 
$X \in \mathcal{B_T}$ and $\mathcal{H} \subseteq \mathcal{T}$, 
$X = \bigcup \mathcal{H}$ implies $X = Y$ for some $Y \in \mathcal{H}$.
 
Let us define the set of rough sets 
\begin{equation*}
\mathcal{J} = \{ (\emptyset,\{x\}^\UP) \mid \ |R(x)| \geq 2 \} \cup  \{
(\{x\}^\Up,\{x\}^{\Up \UP}) \mid x \in U\}. 
\end{equation*}
We can write the following lemma.

\begin{lemma}
\label{Lem:JoinIrreducibles} The members of $\mathcal{J}$ are completely
join-irreducible.
\end{lemma}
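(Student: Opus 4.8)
The plan is to reduce complete join-irreducibility in $\mathcal{RS}$ to the complete join-irreducibility of smallest neighbourhoods in the two Alexandrov topologies $\mathcal{T}^\UP$ and $\mathcal{T}^\DOWN$, which was recalled just before the lemma. Concretely, I would use the coordinatewise join formula of Corollary~\ref{Cor:MeetsJoins}: if a member of $\mathcal{J}$ equals $\bigvee_{i\in I}\mathcal{A}(X_i)$, then its lower coordinate is $\bigcup_{i\in I}X_i^\DOWN$ and its upper coordinate is $\bigcup_{i\in I}X_i^\UP$, and the goal is to exhibit an index $k$ with $\mathcal{A}(X_k)$ equal to the given element. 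Since $^\UP$ is the smallest neighbourhood operator of $\mathcal{T}^\UP$ and $^\Up$ is the smallest neighbourhood operator of $\mathcal{T}^\DOWN$, the sets $\{x\}^\UP$ and $\{x\}^\Up$ are completely join-irreducible in $(\mathcal{T}^\UP,\subseteq)$ and $(\mathcal{T}^\DOWN,\subseteq)$, respectively; these are the facts I would invoke to locate $k$. I would also note at the outset that, by reflexivity, $x\in\{x\}^\UP$ and $x\in\{x\}^\Up$, so every member of $\mathcal{J}$ has nonempty upper coordinate and is therefore distinct from the bottom element $\mathcal{A}(\emptyset)$; this disposes of the degenerate case $S=\emptyset$ vacuously.

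For an element $(\emptyset,\{x\}^\UP)$ of the first type, I would argue that $\bigcup_{i\in I}X_i^\UP=\{x\}^\UP$ together with complete join-irreducibility of $\{x\}^\UP$ in $\mathcal{T}^\UP$ yields some $k\in I$ with $X_k^\UP=\{x\}^\UP$. Because $\bigcup_{i\in I}X_i^\DOWN=\emptyset$ forces $X_i^\DOWN=\emptyset$ for every $i$, in particular $X_k^\DOWN=\emptyset$, so $\mathcal{A}(X_k)=(\emptyset,\{x\}^\UP)$ is the element itself. Here the second coordinate is forced trivially.

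For an element $(\{x\}^\Up,\{x\}^{\Up\UP})$ of the second type, the equality $\bigcup_{i\in I}X_i^\DOWN=\{x\}^\Up$ and complete join-irreducibility of $\{x\}^\Up$ in $\mathcal{T}^\DOWN$ give $k\in I$ with $X_k^\DOWN=\{x\}^\Up$. It then remains to pin down the upper coordinate of $\mathcal{A}(X_k)$. Since $^\DOWN$ is deflationary, $X_k^\DOWN\subseteq X_k$, and applying the order-preserving map $^\UP$ gives $\{x\}^{\Up\UP}=X_k^{\DOWN\UP}\subseteq X_k^\UP$; conversely $X_k^\UP\subseteq\bigcup_{i\in I}X_i^\UP=\{x\}^{\Up\UP}$. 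Hence $X_k^\UP=\{x\}^{\Up\UP}$ and $\mathcal{A}(X_k)$ is again the given element.

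The only real content beyond citing topological complete join-irreducibility is matching both coordinates simultaneously, since the distinguished coordinate (upper for the first type, lower for the second) selects $k$, and the remaining coordinate must then be checked to agree. I expect this coordinate-matching for the second type to be the sole step requiring an argument rather than a direct reference, and the short monotonicity sandwich $X_k^{\DOWN\UP}\subseteq X_k^\UP\subseteq\{x\}^{\Up\UP}$ settles it; thus the proof is essentially routine once the two topological facts and the join formula are in place.
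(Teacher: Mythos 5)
Your proof is correct and takes essentially the same route as the paper's: both arguments reduce complete join-irreducibility in $\mathcal{RS}$ to that of the smallest-base elements $\{x\}^\UP$ and $\{x\}^\Up$ in the Alexandrov topologies $\mathcal{T}^\UP$ and $\mathcal{T}^\DOWN$ via the coordinatewise join formula, and your coordinate-matching sandwich $\{x\}^{\Up\UP}=X_k^{\DOWN\UP}\subseteq X_k^\UP\subseteq\{x\}^{\Up\UP}$ for the second type is exactly the paper's step. The only substantive item the paper includes that you skip is the verification that the pairs in $\mathcal{J}$ are in fact rough sets (namely $\{x\}^\DOWN=\emptyset$ when $|R(x)|\geq 2$, and $\{x\}^{\Up\DOWN}=\{x\}^\Up$), while you are slightly more explicit than the paper in forcing all lower coordinates to be empty in the first case and in excluding the empty join.
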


\begin{proof}
If $|R(x)| \geq 2$, then $R(x) \not \subseteq \{x\}$ and $\{x\}^\DOWN = \emptyset$, 
which implies $\mathcal{A}(\{x\}) = (\emptyset,\{x\}^\UP) \in \mathcal{RS}$. 
Because $\{x\}^\UP$ is  a member of the smallest base $\{ \{x\}^\UP \mid x \in U\}$
of the topology $\mathcal{T}^\UP$, $\{x\}^\UP$ is completely join irreducible
in the complete lattice $(\mathcal{T}^\UP,\subseteq)$. This means that
$\{x\}^\UP = \bigcup_{i \in I} X_i^\UP$ implies $\{x\}^\UP =  X_i^\UP$ for some $i \in I$.
Therefore, the rough set   $\mathcal{A}(\{x\}) = (\emptyset,\{x\}^\UP)$
is completely join-irreducible in $\mathcal{RS}$.

It is clear that for each $x \in U$, $\mathcal{A}(\{x\}^\Up) =
(\{x\}^\Up,\{x\}^{\Up \UP}) \in \mathcal{RS}$, because 
$\{x\}^{\Up \DOWN} = \{x\}^\Up$. We show that $(\{x\}^\Up,\{x\}^{\Up \UP})$ is 
completely join-irreducible. Suppose that there exists a family 
$\{ \mathcal{A}(X_i)\}_{i \in I} = \{ (X_i^\DOWN, X_i^\UP ) \}_{i \in I}$ such that 
\begin{equation*}
(\{x\}^\Up,\{x\}^{\Up \UP}) = \bigvee_{i \in I} \mathcal{A}(X_i)  
= \Big ( \bigcup_{i\in I} X_{i}^\DOWN, \bigcup_{i\in I} X_{i}^\UP \Big ).
\end{equation*}
Since $\{x\}^\Up$ is a member of the smallest base $\{ \{x\}^\Up \mid x \in U\}$
of the topology $\mathcal{T}^\DOWN$, $\{x\}^\Up$ is completely join-irreducible
in the lattice $(\mathcal{T}^\DOWN,\subseteq)$. Therefore,
$\{x\}^\Up =  \bigcup_{i\in I} X_{i}^\DOWN$ implies $\{x\}^\Up = X_i^\DOWN$ for some $i \in I$. 
Since  $X_i^\DOWN \subseteq X_i$, this also gives 
$\{x\}^{\Up \UP} \subseteq X_i^\UP$. The converse, $X_i^\UP \subseteq \{x\}^{\Up \UP}$, 
holds trivially. Thus, $\{x\}^{\Up \UP} = X_i^\UP$ and 
\begin{equation*}
(\{x\}^\Up,\{x\}^{\Up \UP}) = (X_i^\DOWN,X_i^\UP).
\end{equation*}
Hence, $(\{x\}^\Up,\{x\}^{\Up \UP})$ is completely join-irreducible in $\mathcal{RS}$.
\end{proof}

Our next theorem shows that $\mathcal{J}$ is the set of all completely join-irreducible
elements.

\begin{theorem}
\label{Thm:JoinIrreducibles} $\mathcal{J}$ is the set of completely
join-irreducible elements of the complete lattice $\mathcal{RS}$.
Any nonzero element of $\mathcal{RS}$ is a join of some completely
join-irreducible elements of $\mathcal{J}$.
\end{theorem}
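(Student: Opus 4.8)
The plan is to prove two claims: first, that every element of $\mathcal{J}$ is completely join-irreducible (this is already Lemma~\ref{Lem:JoinIrreducibles}, so it remains only to show the reverse inclusion), and second, that every nonzero $\mathcal{A}(X) \in \mathcal{RS}$ is a join of elements of $\mathcal{J}$. These two facts together force $\mathcal{J}$ to be exactly the set of completely join-irreducibles: a completely join-irreducible element $\alpha$ decomposes as a join of members of $\mathcal{J}$ by the second claim, and by complete join-irreducibility $\alpha$ must coincide with one of the joinands, hence $\alpha \in \mathcal{J}$.

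The central step is therefore the decomposition. Given $X \subseteq U$, I would write $X^\UP = \{x\}^\UP = \bigcup\{\{x\}^\UP \mid x \in X\}$ using that $\{\{x\}^\UP \mid x \in U\}$ is the smallest base of $\mathcal{T}^\UP$, and similarly $X^\DOWN = \bigcup\{\{x\}^\Up \mid \{x\}^\Up \subseteq X^\DOWN\}$ using that $\{\{x\}^\Up \mid x \in U\}$ is the smallest base of $\mathcal{T}^\DOWN$ and that $X^\DOWN \in \mathcal{T}^\DOWN$. The goal is to exhibit a family of members of $\mathcal{J}$ whose join, computed by the formula in Corollary~\ref{Cor:MeetsJoins}, has first coordinate $X^\DOWN$ and second coordinate $X^\UP$. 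The natural candidate family splits according to the two types of generator in $\mathcal{J}$: for points $x$ with $\{x\}^\Up \subseteq X^\DOWN$ take the generators $(\{x\}^\Up, \{x\}^{\Up\UP})$, which contribute to both coordinates; and for the remaining part of $X^\UP$ not yet covered, take generators of the form $(\emptyset, \{x\}^\UP)$ with $|R(x)| \geq 2$, which contribute only to the upper coordinate. The key verification is that the union of the chosen lower coordinates recovers exactly $X^\DOWN$, and the union of all upper coordinates recovers exactly $X^\UP$, with nothing spilling over.

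The main obstacle I anticipate is coordinating the two coordinates simultaneously, in particular ensuring that the ``upper-only'' generators $(\emptyset, \{x\}^\UP)$ we add to build up $X^\UP$ contribute nothing spurious to the lower coordinate, and conversely that every point needed in $X^\UP$ is genuinely available from $\mathcal{J}$. For a point $x \in X^\UP$, either $|R(x)| \geq 2$, in which case $(\emptyset, \{x\}^\UP) \in \mathcal{J}$ is directly available, or $|R(x)| = 1$, i.e.\ $R(x) = \{x\}$, in which case $x \in X^\UP$ forces $x \in X$ and indeed $x \in X^\DOWN$ since $R(x) = \{x\} \subseteq X$; then $\{x\}^\Up \subseteq X^\DOWN$ and the generator $(\{x\}^\Up, \{x\}^{\Up\UP})$ already supplies $x$ in both coordinates. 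This case analysis on $|R(x)|$ is precisely what makes the two types of generator in $\mathcal{J}$ sufficient, and it is the same dichotomy that appeared in the proof of Theorem~\ref{Thm:Main}. Once the decomposition is established, the identification of $\mathcal{J}$ with the full set of completely join-irreducibles is immediate from the argument in the first paragraph.
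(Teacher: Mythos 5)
Your proposal is correct and follows the paper's overall skeleton---Lemma~\ref{Lem:JoinIrreducibles} for one inclusion, a decomposition of every rough set as a join of members of $\mathcal{J}$ (computed coordinatewise via Corollary~\ref{Cor:MeetsJoins}) for the other, with complete join-irreducibility then forcing any completely join-irreducible element to lie in $\mathcal{J}$---but your decomposition differs from the paper's in one substantive detail: the index set of the ``upper-only'' generators. The paper takes $(\emptyset,\{x\}^\UP)$ for $x \in X$ with $|R(x)|\geq 2$, and the price it pays is a three-case coverage analysis for $X^\UP$; the delicate case is $x \in X^\UP \setminus X$, where one must choose an auxiliary point $y \in R(x)\cap X$ and split again according to whether $R(y) \subseteq X$. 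You instead take $(\emptyset,\{x\}^\UP)$ for $x \in X^\UP$ with $|R(x)|\geq 2$, which makes coverage immediate (each such $x$ lies in its own $\{x\}^\UP$ by reflexivity, and the case $|R(x)|=1$ reduces to the first family exactly as you argue), so the paper's hardest case disappears entirely. The cost is shifted to the ``nothing spills over'' direction, which you flag as the key verification but never actually carry out: for your second family one needs $\{x\}^\UP \subseteq X^\UP$ for every $x \in X^\UP$, and this is not monotonicity alone but follows in one line from idempotence, $\{x\}^\UP \subseteq X^{\UP\UP} = X^\UP$ (this is precisely where transitivity of $R$ enters), whereas for the paper's choice $x \in X$ the inclusion $\{x\}^\UP \subseteq X^\UP$ needs only monotonicity; for the first family, $\{x\}^\Up \subseteq X^\DOWN \subseteq X$ gives $\{x\}^{\Up\UP} \subseteq X^\UP$ by monotonicity, just as in the paper. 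With that one-line check supplied, your argument is complete, and is arguably a slight streamlining of the paper's proof.
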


\begin{proof}
Next, we will prove that each $(X^\DOWN,X^\UP) \in \textit{RS\/}$ can be
expressed as the join of some elements in $\mathcal{J}$. We begin by showing
that for all $X \subseteq U$, 
\begin{equation*}
X^\DOWN = \bigcup \{ \{x\}^\Up \mid \{x\}^\Up \subseteq X\}. 
\end{equation*}
Because $X^\DOWN \in \mathcal{T}^\DOWN$ and $\{ \{x\}^\Up \mid x \in U\}$ is the
smallest base of the topology $\mathcal{T}^\DOWN$, we get 
\begin{eqnarray*}
X^\DOWN & = & \bigcup \{ \{x\}^\Up \mid x \in X^\DOWN\} \\
& = & \bigcup \{ \{x\}^\Up \mid R(x) \subseteq X \} \\
& = & \bigcup \{ \{x\}^\Up \mid \{x\}^\Up \subseteq X \}.
\end{eqnarray*}
Since $\bigcup \{ \{x\}^\Up \mid \{x\}^\Up \subseteq X \} \subseteq X$ and 
$^\UP$ distributes over unions, we have 
\begin{equation*}
\bigcup \{ \{x\}^{\Up \UP} \mid \{x\}^\Up \subseteq X \}
\subseteq X^\UP. 
\end{equation*}
Hence, 
\begin{equation*}
\bigvee \{ (\{x\}^\Up,\{x\}^{\Up \UP}) \mid \{x\}^\Up
\subseteq X\} \leq (X^\DOWN,X^\UP). 
\end{equation*}
Obviously, 
\begin{equation*}
\bigvee \{ (\emptyset,\{x\}^\UP) \mid x \in X \mbox{ and } |R(x)| \geq 2 \}
\leq (X^\DOWN,X^\UP) 
\end{equation*}
Next we will show that 
\begin{eqnarray*}
(X^\DOWN,X^\UP) & = & \bigvee \{ (\emptyset,\{x\}^\UP) \mid x \in X \mbox{
and } |R(x)| \geq 2 \} \\
& & \vee \ \bigvee \{ (\{x\}^\Up,\{x\}^{\Up \UP}) \mid
\{x\}^\Up \subseteq X\}.
\end{eqnarray*}
Let $x \in X^\UP$. If $|R(x)| = 1$, then clearly also $x \in X^\DOWN$. In
this case, $\{x\}^\Up = \{x\} \subseteq X$, $x \in \{x\}^\Up$, and $x \in
\{x\}^{\Up \UP}$.

If $|R(x)| \geq 2$, then we consider three different cases: (i) $x \in
X^\DOWN$, (ii) $x \in X \setminus X^\DOWN$, and (iii) $x \in X^\UP \setminus X$.

(i) If $x \in X^\DOWN$, then $x \in \{x\}^\Up = R(x) \subseteq X$, and
trivially $x \in \{x\}^{\Up \UP}$.

(ii) If $x \in X$, but $x \notin X^\DOWN$, then $\{x\}^\Up = R(x) \not
\subseteq X$. However, $x \in \{x\}^\UP$ and $x \in X$. In addition, $x \in X
$ and $x \notin X^\DOWN$ imply $|R(x)| \geq 2$ and $\{x\}^\DOWN = \emptyset$.

(iii) If $x \in X^\UP$, but $x \notin X$, then necessarily $x \in \{y\}^\UP$
for some $y \in X$ such that $x \neq y$. We have now two possibilities,
either $R(y) \subseteq X$ or $R(y) \not \subseteq X$. If $\{y\}^\Up = R(y)
\subseteq X$, then necessarily $x \notin \{y\}^\Up$. However, $x \in
\{y\}^\UP \subseteq \{y\}^{\Up \UP}$. If $R(y) \not
\subseteq X$, then $R(y) = \{y\}^\Up$ contains at least two elements,
because $y \in X$. This implies $\{y\}^\DOWN = \emptyset$. Thus, 
$(\emptyset,\{y\}^\UP) \in \mathcal{RS}$ and recall that $x \in \{y\}^\UP$
and $y \in X$.
\end{proof}

Clearly, the image of the set $\mathcal{J}$ under the mapping 
$c \colon \mathcal{A}(X) \mapsto \mathcal{A}(X^c)$ is
\begin{align*}
\mathcal{M} & = \{ \mathcal{A}( \{x\}^c )   \mid \ |R(x)| \geq 2\} \cup \{  \mathcal{A}(\{x\}^{\Up c} ) \mid x \in U\} \\
    & = \{ ( \{x\}^{\UP c}, U )  \mid \ |R(x)| \geq 2\} \cup \{ ( \{x\}^{\Up \UP c}, \{x\}^{\Up c} ) \mid x \in U\} .
\end{align*}
Since the completely meet-irreducible elements of the lattice $\mathcal{RS}$ are
just the completely join-irreducible elements of its dual $\mathcal{RS}^\mathrm{op}$,
and because $\mathcal{RS}$ is order-isomorphic to $\mathcal{RS}^\mathrm{op}$ via the mapping
$\mathcal{A}(X) \mapsto \mathcal{A}(X^c)$, we obtain the following corollary.

\begin{corollary} \label{Cor:MeetIrreducibles}
 $\mathcal{M}$ is the set of completely
meet-irreducible elements of the complete lattice $\mathcal{RS}$.
Any nonunit element of $\mathcal{RS}$ is a meet of some completely
meet-irreducible elements of $\mathcal{M}$.
\end{corollary}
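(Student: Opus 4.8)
The plan is to deduce this corollary entirely from Theorem~\ref{Thm:JoinIrreducibles} by exploiting the self-duality of $\mathcal{RS}$ witnessed by the map $c \colon \mathcal{A}(X) \mapsto \mathcal{A}(X^c)$. Recall from Section~\ref{Sec:Complements} that $c$ is a de~Morgan operation; in particular it is an order-reversing involution, hence an order-anti-automorphism of $\mathcal{RS}$ onto itself. Equivalently, by Proposition~\ref{Prop:SelfDual}, $c$ is an order-isomorphism from $\mathcal{RS}$ onto its dual $\mathcal{RS}^{\mathrm{op}}$.

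First I would record the general lattice-theoretic fact that such a map exchanges the two notions of complete irreducibility. Since $c$ is an order-isomorphism onto the dual, it carries arbitrary joins in $\mathcal{RS}$ to arbitrary meets and vice versa: for every $S \subseteq \textit{RS\/}$ we have $c\left(\bigvee S\right) = \bigwedge c(S)$ and $c\left(\bigwedge S\right) = \bigvee c(S)$. Consequently, an element $\alpha$ is completely join-irreducible in $\mathcal{RS}$ if and only if $c(\alpha)$ is completely meet-irreducible in $\mathcal{RS}$; indeed, if $c(\alpha) = \bigwedge T$, then $\alpha = c(c(\alpha)) = \bigvee c(T)$, so complete join-irreducibility of $\alpha$ forces $\alpha \in c(T)$ and hence $c(\alpha) \in T$, and the converse is symmetric. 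Because $c$ is a bijection, the image under $c$ of the set of all completely join-irreducibles is precisely the set of all completely meet-irreducibles.

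By Theorem~\ref{Thm:JoinIrreducibles}, the set of completely join-irreducible elements of $\mathcal{RS}$ is exactly $\mathcal{J}$. Applying $c$ and using the computation already carried out in the text preceding the corollary, namely $c(\mathcal{J}) = \mathcal{M}$, I conclude that $\mathcal{M}$ is precisely the set of completely meet-irreducible elements of $\mathcal{RS}$.

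For the representation claim, let $\beta \in \textit{RS\/}$ be any nonunit element. Since $c$ maps the greatest element $\mathcal{A}(U)$ to the least element $\mathcal{A}(\emptyset)$ and is a bijection, $c(\beta)$ is a nonzero element. By Theorem~\ref{Thm:JoinIrreducibles}, $c(\beta) = \bigvee S$ for some $S \subseteq \mathcal{J}$. Applying $c$ once more and using $c \circ c = \mathrm{id}$ together with the join-to-meet conversion above, I obtain $\beta = c(c(\beta)) = \bigwedge c(S)$ with $c(S) \subseteq c(\mathcal{J}) = \mathcal{M}$, so $\beta$ is a meet of completely meet-irreducible elements of $\mathcal{M}$. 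The only point deserving a word of justification is that $c$ genuinely converts arbitrary (not merely finite) joins into meets; but this is immediate from $c$ being an order-isomorphism onto $\mathcal{RS}^{\mathrm{op}}$, so no real obstacle arises and the argument is purely formal duality.
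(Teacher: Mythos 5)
Your proposal is correct and follows essentially the same route as the paper: the text preceding the corollary justifies it exactly by noting that $c(\mathcal{J}) = \mathcal{M}$ and that $c$ is an order-isomorphism of $\mathcal{RS}$ onto $\mathcal{RS}^{\mathrm{op}}$, so completely meet-irreducible elements correspond to completely join-irreducible ones under Theorem~\ref{Thm:JoinIrreducibles}. You have merely spelled out the routine duality details (arbitrary joins going to meets, nonunit going to nonzero) that the paper leaves implicit.
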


\section{Characterization of the Stonean Case} \label{Sec:Stonean}

In the case of a quasiorder $R\subseteq U\times U$, the smallest equivalence
containing $R$ is $R\vee R^{-1}$, where $\vee $ denotes the join
in the lattice  of all quasiorders on $U$ ordered with the set-inclusion relation $\subseteq$.
Furthermore,  $R\vee R^{-1}$ is equal to the transitive closure of the relation $R\cup R^{-1}$. 
Hence, the connected components of $R$ are just the equivalence classes of 
$R\vee R^{-1}$.

\begin{proposition} \label{Prop:ACC}
Let $R$  be a quasiorder on $U$.
Then, the following assertions are equivalent for any $X \subseteq U$:
\begin{enumerate}[\rm (i)]
\item $\mathcal{A}(X)$ is a complemented element of $\mathcal{RS}$;

\item $\mathcal{A}(X)$ is an exact element of $\textit{RS\/}$;

\item $X$ is a union of some equivalence classes of $R\vee R^{-1}$.
\end{enumerate}
\end{proposition}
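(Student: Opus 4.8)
The plan is to run the cycle (i)~$\Rightarrow$~(ii)~$\Rightarrow$~(iii)~$\Rightarrow$~(i), using three ingredients: the remark just above the proposition that the equivalence classes of $R \vee R^{-1}$ are exactly the connected components in $\mathfrak{Co}$; Corollary~\ref{Cor:Exact}, which asserts that any union of connected components is exact; and the meet/join formulas of Corollary~\ref{Cor:MeetsJoins}. Since $R$ is a quasiorder, it is reflexive and hence left-total, so $\mathcal{RS}$ has least element $(\emptyset,\emptyset)$ and greatest element $(U,U)$, and every $A \subseteq U$ satisfies $A^\DOWN \subseteq A \subseteq A^\UP$ together with $A^\DOWN \subseteq A^\UP$; these facts are used repeatedly.

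For the implication (i)~$\Rightarrow$~(ii), which I expect to be the only step requiring a genuine idea, suppose $\mathcal{A}(Y) = (Y^\DOWN,Y^\UP)$ is a complement of $\mathcal{A}(X)$. By Corollary~\ref{Cor:MeetsJoins} the conditions $\mathcal{A}(X) \wedge \mathcal{A}(Y) = (\emptyset,\emptyset)$ and $\mathcal{A}(X) \vee \mathcal{A}(Y) = (U,U)$ read coordinatewise as $X^\DOWN \cap Y^\DOWN = \emptyset$, $X^\UP \cap Y^\UP = \emptyset$, $X^\DOWN \cup Y^\DOWN = U$ and $X^\UP \cup Y^\UP = U$. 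The two upper conditions force $Y^\UP = X^{\UP c} = X^{c \DOWN}$, and the two lower ones force $Y^\DOWN = X^{\DOWN c} = X^{c \UP}$, using the duality $X^{\UP c} = X^{c\DOWN}$ and $X^{\DOWN c} = X^{c\UP}$. Now the crucial point is that left-totality gives $Y^\DOWN \subseteq Y^\UP$, i.e.\ $X^{c\UP} \subseteq X^{c\DOWN}$; since the reverse inclusion $X^{c\DOWN} \subseteq X^{c\UP}$ always holds, we obtain $X^{c\DOWN} = X^c = X^{c\UP}$, so $X^c$ is exact, and hence so is $X$ by duality. This is the heart of the matter: the lattice complement $\mathcal{A}(Y)$, a priori unrelated to the set complement $X^c$, is squeezed by $Y^\DOWN \subseteq Y^\UP$ into having its lower and upper approximations coincide.

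The remaining implications are routine. For (ii)~$\Rightarrow$~(iii), assume $X^\DOWN = X = X^\UP$. The equality $X = X^\DOWN$ says $R(x) \subseteq X$ for each $x \in X$, so $X$ is closed under $R$-successors; the equality $X = X^\UP$ says that $R(x) \cap X \neq \emptyset$ implies $x \in X$, so $X$ is closed under $R$-predecessors. Hence $X$ is closed under one step of $R \cup R^{-1}$ and therefore under its transitive closure $R \vee R^{-1}$, which makes $X$ a union of equivalence classes of $R \vee R^{-1}$. For (iii)~$\Rightarrow$~(i), write $X = \bigcup \mathcal{H}$ with $\mathcal{H} \subseteq \mathfrak{Co}$; then $X^c = \bigcup (\mathfrak{Co} \setminus \mathcal{H})$ is again a union of connected components, so Corollary~\ref{Cor:Exact} makes both $\mathcal{A}(X)$ and $\mathcal{A}(X^c)$ exact. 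Computing with Corollary~\ref{Cor:MeetsJoins} then gives $\mathcal{A}(X) \wedge \mathcal{A}(X^c) = (X \cap X^c, X \cap X^c) = (\emptyset,\emptyset)$ and $\mathcal{A}(X) \vee \mathcal{A}(X^c) = (X \cup X^c, X \cup X^c) = (U,U)$, so $\mathcal{A}(X^c)$ is a complement of $\mathcal{A}(X)$ and (i) holds.
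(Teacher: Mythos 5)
Your proposal is correct and takes essentially the same route as the paper: the same cycle (i)$\Rightarrow$(ii)$\Rightarrow$(iii)$\Rightarrow$(i), built on the reflexivity inclusions $A^\DOWN \subseteq A \subseteq A^\UP$, the duality $X^{\DOWN c} = X^{c\UP}$, Corollary~\ref{Cor:Exact}, and Corollary~\ref{Cor:MeetsJoins}. The only cosmetic differences are in (i)$\Rightarrow$(ii), where the paper compresses your four coordinatewise conditions into the single chain $Y^{c} \subseteq Y^{\DOWN c} \subseteq X^\DOWN \subseteq X \subseteq X^\UP \subseteq Y^{\UP c} \subseteq Y^{c}$ (using only two of them and concluding $X^\DOWN = X = X^\UP = Y^c$ directly), and in (iii)$\Rightarrow$(i), where the paper obtains exactness of $X^c$ by duality from that of $X$ rather than by observing that $X^c$ is itself a union of connected components.
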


\begin{proof}
(i)$\Rightarrow$(ii): Assume that there exists a set $Y \subseteq U$ such that 
$\mathcal{A}(Y)$ is the complement of $\mathcal{A}(X)$. Then, $X^\UP \cap Y^\UP = \emptyset$ and 
$X^\DOWN\cup Y^\DOWN=U$. Thus, we have $Y^{c} \subseteq Y^{\DOWN c}\subseteq
X^\DOWN\subseteq X \subseteq X^\UP \subseteq Y^{\UP c}\subseteq Y^{c}$, 
proving $X^{\DOWN }=X=X^\UP=Y^{c}$.

(ii)$\Rightarrow$(iii): Clearly, (ii) implies $X^\DOWN = X$ and $X^{\Down} = X^{\UP\Down} = X^\UP = X$.
Hence, for any $x\in X$, we have $(R\cup R^{-1})(x) \subseteq X$, that is, $X$ 
is closed with respect to the relation $R\cup R^{-1}$. Then, $X$ must also be closed
with respect to the transitive closure $R\vee R^{-1}$ of $R\cup R^{-1}$, that is,
$(R\vee R^{-1})(x) \subseteq X$ for all $x \in X$. This implies
$X = \bigcup \{ (R\vee R^{-1})(x) \mid x \in X\}$.

(iii)$\Rightarrow$(i): Suppose that $X = \bigcup \mathcal{H}$,
where $\mathcal{H}$ is a set of some equivalence classes of $R\vee R^{-1}$. 
Because for each $C \in \mathcal{H}$, the set $C$ is a connected component of $R$, $X^{\DOWN }=X^\UP=X$
by Corollary~\ref{Cor:Exact}. Then, $X^{c\DOWN} = X^{\UP c} = X^{c} = X^{\DOWN c} = X^{c\UP}$, that is,
also  $\mathcal{A}(X^{c})$ is an exact element of $\textit{RS\/}$. Since
$\mathcal{A}(X) \wedge \mathcal{A}(X^{c}) = (X \cap X^{c}, X\cap X^{c}) =
(\emptyset,\emptyset)=\mathcal{A}(\emptyset)$ \ and \ 
$\mathcal{A}(X) \vee \mathcal{A}(X^{c}) = (X\cup X^{c},X\cup X^{c}) =(U,U)= \mathcal{A}(U)$,
we have that $\mathcal{A}(X)$ is a complemented element of $\mathcal{RS}$.
\end{proof}

\begin{remark} \label{Rem:Exact}
If the assumption of Proposition~\ref{Prop:ACC} is satisfied, then from the arguments 
of the above proof it follows also that $\mathcal{A}(X)$ is exact  if and only if 
$\mathcal{A}(X^{c})$ is exact.
\end{remark}

An element $a$ of a bounded lattice $\mathcal{L} = (L,\leq)$ is called a 
\emph{central element} of $\mathcal L$ if $a$ is complemented and for all 
$x,y\in L$ the sublattice generated by $\{a,x,y\}$ is distributive. 
Notice that the complement of a central element is unique and it is also a 
central element of $\mathcal L$. Clearly, the least element $0$
and the greatest element $1$ of $\mathcal L$ are always central elements. 
It is known that $\mathcal{L} \cong \mathcal{L}_{1}\times \mathcal{L}_{2}$ 
for some nontrivial bounded lattices $\mathcal{L}_{1}$ and $\mathcal{L}_{2}$ 
if and only if there exists a pair of central elements $c_{1}, c_{2}\in L \setminus \{0,1\}$ 
such that $(c_{1}]\cong L_{1}$, $(c_{2}]\cong L_{2}$ and $c_{1}$ and $c_{2}$ are complements 
of each other, where $(x]$ denotes the principal ideal $\{ y \in L \mid y \leq x \}$ of $x$
(for details, see  e.g. \cite{Rade00,Rade03}).
A lattice $\mathcal L$ is \emph{directly indecomposable} if there are no nontrivial 
lattices $\mathcal{L}_{1}$ and $\mathcal{L}_{2}$ satisfying 
$\mathcal{L} \cong \mathcal{L}_{1} \times \mathcal{L}_{2}$ (see \cite{BuSa81},
for instance). Clearly, this is equivalent to the fact that $\mathcal L$ has 
no nontrivial central elements. It is
also obvious that the central elements of a bounded distributive lattice are 
exactly its complemented elements.

\begin{proposition} \label{Prop:DirectlyIrreducible}
The following assertions are true for any quasiorder $R$.

\begin{enumerate}[\rm (i)]

\item For any connected component $C \in \mathfrak{Co}$, the lattice $\mathcal{RS}(C)$ is directly indecomposable.

\item The lattice $\mathcal{RS}$ is directly  indecomposable if and only if 
$R$ is a connected quasiorder, that is, $R$ has a single connected component.
\end{enumerate}
\end{proposition}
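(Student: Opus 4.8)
The plan is to use the criterion recorded just above the statement: a bounded distributive lattice is directly indecomposable precisely when it has no central elements other than $0$ and $1$, and in a distributive lattice the central elements are exactly the complemented elements. Since $\mathcal{RS}$, and likewise each $\mathcal{RS}(C)$ (being the lattice of rough sets determined by the quasiorder $R \cap (C \times C)$ on the nonempty set $C$), is completely distributive by Corollary~\ref{Cor:MeetsJoins}, it therefore suffices to determine the complemented elements; for this I would appeal to Proposition~\ref{Prop:ACC}.

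For part (i), I would apply Proposition~\ref{Prop:ACC} with $U$ replaced by $C$ and $R$ replaced by its restriction $R \cap (C \times C)$. That proposition tells us that $\mathcal{A}(X)$ is a complemented element of $\mathcal{RS}(C)$ exactly when $X$ is a union of equivalence classes of $(R \cap (C\times C)) \vee (R \cap (C \times C))^{-1}$. Because $C$ is a single connected component of $R$, this equivalence has only one class, namely $C$ itself. Hence the only sets $X$ arising this way are $\emptyset$ and $C$, giving just the two complemented elements $\mathcal{A}(\emptyset) = (\emptyset,\emptyset)$ and $\mathcal{A}(C) = (C,C)$, which are the bottom and top of $\mathcal{RS}(C)$. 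Thus $\mathcal{RS}(C)$ has no nontrivial central element and is directly indecomposable.

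For part (ii), the implication from connectedness is immediate: if $R$ has a single connected component $C = U$, then $\mathcal{RS} = \mathcal{RS}(U) = \mathcal{RS}(C)$, which is directly indecomposable by part (i). For the converse I would argue contrapositively via the representation Theorem~\ref{Thm:Representation}, which gives $\mathcal{RS} \cong \prod_{C \in \mathfrak{Co}} \mathcal{RS}(C)$ (quasiorders are reflexive, hence left-total). If $R$ has at least two connected components, then this is a product of at least two factors, each nontrivial since every component $C$ is nonempty, so $\mathcal{RS}(C)$ contains the distinct elements $(\emptyset,\emptyset)$ and $(C,C)$. A product of two or more nontrivial bounded lattices is directly decomposable, so $\mathcal{RS}$ fails to be directly indecomposable, completing the contrapositive.

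The main obstacle is really the bookkeeping in part (i): one must verify that Proposition~\ref{Prop:ACC} applies verbatim to the restricted relation, that is, that $R \cap (C \times C)$ is again a quasiorder on the nonempty set $C$, that $\mathcal{RS}(C)$ is completely distributive, and that the connected components of the restricted relation collapse to the single class $C$. Once this is in place, both parts follow mechanically. As an alternative to invoking Proposition~\ref{Prop:ACC} in (i), one could instead read off directly from Corollary~\ref{Cor:Exact} that the only exact, hence (by Proposition~\ref{Prop:ACC}) complemented, elements on a single connected component are the two trivial ones.
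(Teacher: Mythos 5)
Your proposal is correct and follows essentially the same route as the paper: both identify the complemented (equivalently, by distributivity, central) elements via Proposition~\ref{Prop:ACC}, conclude that a single connected component yields only the trivial central elements, and handle the decomposable direction of (ii) through the product representation of Theorem~\ref{Thm:Representation}. The only cosmetic differences are that you argue (i) directly rather than by contradiction and obtain the forward direction of (ii) as the special case $C=U$ of (i) instead of reapplying Proposition~\ref{Prop:ACC} to $U$; also note that your closing alternative slightly misattributes to Corollary~\ref{Cor:Exact} the implication exact $\Rightarrow$ union of classes, which really is Proposition~\ref{Prop:ACC}(ii)$\Rightarrow$(iii), but this aside does not affect your main argument.
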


\begin{proof} (i) Assume that there exists $C \in \mathfrak{Co}$ such that
the lattice $\mathcal{RS}(C)$ is directly decomposable. Then,
$\mathcal{RS}(C)$ has at least one nontrivial central element.
This means that in $\mathcal{RS}(C)$ exists a complemented element 
$\mathcal{A}(X)$ for some $X \subseteq C$ such that 
$\mathcal{A}(X) \neq (\emptyset,\emptyset)$ and $\mathcal{A}(X) \neq (C,C)$. 
Then, according to Proposition~\ref{Prop:ACC}, $X$ is a join of some equivalence 
classes of the restriction of $R\vee R^{-1}$ to $C$. However $(x,y) \in R\vee R^{-1}$ 
is satisfied for all $x,y\in C$, because $C$ is an equivalence class
of $R\vee R^{-1}$. This fact implies $X = C$, that is, 
$\mathcal{A}(X)=(C,C)$, a contradiction.

(ii) If $R$ is a connected quasiorder on $U$, then $R\vee R^{-1}$
has just one equivalence class $U$. Therefore, by applying
Proposition~\ref{Prop:ACC}, the complemented elements are just $\mathcal{A}(\emptyset)$
and $\mathcal{A}(U)$. This means that $\mathcal{RS}$ contains only the trivial central elements 
$\mathcal{A}(\emptyset)$  and  $\mathcal{A}(U)$ that are the least and the
greatest elements of $\mathcal{RS}$, respectively. Thus, the lattice $\mathcal{RS}$ 
is directly  indecomposable.

The other part is an obvious consequence of (i) and the isomorphism 
of $\mathcal{RS}$ and $\prod_{C \in \mathfrak{Co}} \mathcal{RS}(C)$ established in 
Theorem~\ref{Thm:Representation}.
\end{proof}

Let $\mathcal L$ be a pseudocomplemented bounded distributive lattice. 
If $x^{\ast} \vee x^{\ast\ast} = 1$ holds for all $x\in L$, then $\mathcal L$ is called a 
\emph{Stone lattice}. Obviously, this is equivalent to the fact that 
$x^*$ is a complemented element of $\mathcal L$ for each $x \in L$.

\begin{theorem} \label{Thm:Characterization}
Let $R$ be a quasiorder on $U$.
Then, $\mathcal{RS}$ is a Stone lattice \  if and only if 
\ $R^{-1}\circ R = R\vee R^{-1}$.
\end{theorem}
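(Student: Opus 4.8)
The plan is to turn the Stone property into a single relational identity by combining the explicit pseudocomplement of Proposition~\ref{Prop:Pseudo} with the description of complemented elements in Proposition~\ref{Prop:ACC}. Since $\mathcal{RS}$ is a pseudocomplemented bounded distributive lattice (Corollary~\ref{Cor:MeetsJoins} and Proposition~\ref{Prop:DuallyPseudo}), the remark preceding the theorem tells us that $\mathcal{RS}$ is Stone if and only if $\mathcal{A}(X)^{*}$ is complemented for every $X \subseteq U$. By Proposition~\ref{Prop:Pseudo} we have $\mathcal{A}(X)^{*} = \mathcal{A}(X^{\UP \Up c})$, and by Proposition~\ref{Prop:ACC} this element is complemented exactly when $X^{\UP \Up c}$ is a union of equivalence classes of $R \vee R^{-1}$. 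As the set-complement of a union of such classes is again a union of classes, this is equivalent to $X^{\UP \Up}$ being a union of connected components of $R$. Thus the first step reduces the theorem to the statement that $X^{\UP \Up}$ is a union of connected components for every $X \subseteq U$.

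Next I would localize this condition to singletons. Both operators involved distribute over arbitrary unions: $^\UP$ by the identities recalled in Section~\ref{Sec:TWO}, and $^\Up$ because it is the smallest-neighbourhood operator of the Alexandrov topology $\mathcal{T}^\DOWN$, so that $Y^\Up = \bigcup_{y \in Y} R(y)$. Hence $X^{\UP \Up} = \bigcup_{x \in X} \{x\}^{\UP \Up}$, and therefore $X^{\UP \Up}$ is a union of components for all $X$ if and only if $\{x\}^{\UP \Up}$ is a union of components for every $x \in U$.

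The technical core is then the computation of $\{x\}^{\UP \Up}$. Using $\{x\}^\UP = R^{-1}(x)$ and the formula for $^\Up$ above,
\[
\{x\}^{\UP \Up} = \big( R^{-1}(x) \big)^{\Up} = \{ z \in U \mid (\exists y \in U)\; y \inR{R} x \ \& \ y \inR{R} z \} = (R^{-1} \circ R)(x),
\]
i.e.\ the set of elements sharing a common $R$-predecessor with $x$. Because $R \vee R^{-1}$ is transitive and contains both $R$ and $R^{-1}$, we always have $R^{-1} \circ R \subseteq R \vee R^{-1}$, so $\{x\}^{\UP \Up} \subseteq (R \vee R^{-1})(x)$, the component of $x$; and reflexivity gives $x \in \{x\}^{\UP \Up}$. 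A nonempty subset of one component can be a union of components only by being that whole component, so $\{x\}^{\UP \Up}$ is a union of components if and only if $(R^{-1} \circ R)(x) = (R \vee R^{-1})(x)$. Ranging over all $x$, this is precisely $R^{-1} \circ R = R \vee R^{-1}$, which settles both directions simultaneously.

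I expect the only real care to be needed in the bookkeeping of the four dual operators: one must keep straight that $^\UP$ is the closure and $^\Up$ the smallest-neighbourhood operator of $\mathcal{T}^\DOWN$, so that $\{x\}^{\UP \Up}$ is genuinely the $R$-upward closure of the set $R^{-1}(x)$ of $R$-predecessors of $x$, and hence equals $(R^{-1} \circ R)(x)$. Once this identification is in place the argument is a clean chain of equivalences through Propositions~\ref{Prop:Pseudo} and~\ref{Prop:ACC}, requiring no case analysis and (in contrast with Theorem~\ref{Thm:Main}) no appeal to any choice principle.
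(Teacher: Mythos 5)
Your proof is correct and follows essentially the same route as the paper's: both use Proposition~\ref{Prop:Pseudo} and Proposition~\ref{Prop:ACC} to reduce the Stone property to the statement that $X^{\UP \Up}$ is a union of connected components for every $X \subseteq U$, and both rest on the computation $\{x\}^{\UP \Up} = (R^{-1} \circ R)(x)$ together with the inclusion $R^{-1} \circ R \subseteq R \vee R^{-1}$. The only difference is organizational: you handle the converse by distributing $^{\UP \Up}$ over the union of singletons, whereas the paper runs a short element-chasing argument to show $(R \vee R^{-1})(x) \subseteq X^{\UP \Up}$ for each $x \in X^{\UP \Up}$ --- a minor streamlining, not a different method.
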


\begin{proof}
Assume that $\mathcal{RS}$ is a Stone lattice. Then, for any 
$X \subseteq U$, the rough set $\mathcal{A}(X)^{\ast} = \mathcal{A}(X^{\UP \Up c})$ 
is a complemented element of $\mathcal{RS}$. This implies by
Proposition~\ref{Prop:ACC} and Remark~\ref{Rem:Exact} that
$\mathcal{A}(X^{\UP \Up c})$ and also its complement $\mathcal{A}(X^{\UP \Up })$ are 
exact elements of $\textit{RS\/}$. Let $x$ be an arbitrary element of $U$ and 
let us set $X = \{x\}$. Then, it is
easy to see that $X^{\UP \Up }=(R^{-1}\circ R)(x)$. By
Proposition~\ref{Prop:ACC}, $(R^{-1}\circ R)(x)$ is equal to the union of
some equivalence classes of $R \vee R^{-1}$. Because 
$R^{-1}\circ R \subseteq R\vee R^{-1}$, this implies 
$(R^{-1}\circ R)(x) = (R\vee R^{-1})(x)$. As this equality is satisfied for all 
$x\in U$, we obtain $R^{-1}\circ R = R\vee R^{-1}$.

Conversely, assume that the condition of Theorem~\ref{Thm:Characterization}
and the equality $R^{-1}\circ R = R\vee R^{-1}$ are satisfied. Then,
according to Corollary~\ref{Cor:MeetsJoins} and Proposition~\ref{Prop:DuallyPseudo},  
$\mathcal{RS}$ is a distributive pseudocomplemented complete lattice. 
We have to show that $\mathcal{A}(X)^*$ is complemented for any $X \subseteq U$.

Assume that $X\subseteq U$. Let $x\in X^{\UP \Up }$ and $y\in (R\vee R^{-1})(x)$. 
Then, by the definition of $X^{\UP \Up }$, 
there exist $z \in R^{-1}(x)\cap X^{\UP }$ and $v \in R(z)\cap X$. 
These mean $x \inR{R^{-1}} z$ and $z \inR{R} v$, from which we obtain 
$(x,v) \in R^{-1}\circ R$ with $v \in X$. Therefore, $(v,x) \in R\vee R^{-1}$, 
and so $(x,y) \in R\vee R^{-1}$ implies $(v,y) \in R\vee R^{-1}$. This
means that  $y\in (R\vee R^{-1})(v) = (R^{-1}\circ R)(v) = \{v\}^{\UP \Up }\subseteq X^{\UP \Up }$. 
This result proves that 
$(R\vee R^{-1})(x) \subseteq X^{\UP \Up }$ for all $x\in X^{\UP \Up }$.
From this we get that $X^{\UP \Up }$ is the union of some classes of $R\vee R^{-1}$. 
So, by Proposition~\ref{Prop:ACC}, $\mathcal{A}(X^{\UP \Up })$ is 
a complemented and exact element of $\mathcal{RS}$. Then, 
by Remark~\ref{Rem:Exact}, also $\mathcal{A}(X)^{\ast} = \mathcal{A}(X^{\UP \Up c})$ 
is an exact and complemented element of $\mathcal{RS}$.
Therefore, $\mathcal{RS}$ is a Stone lattice.
\end{proof}

A partially ordered set $(P,\leq)$ is called \emph{down-directed}, if for
any $a,b\in P$ there exists $c\in P$ with $c \leq a,b$. In what follows, we
deduce some corollaries of the above theorem in cases $R$ is a partial order or an equivalence,
respectively. Notice that case (i) of the next corollary shows that 
the result of M.~Gehrke and E.~Walker stating that for equivalences, 
$\mathcal{RS}$ is isomorphic two and three elements also follows in an alternative 
way from our Theorem~\ref{Thm:Characterization}.
We note that the details on the direct decomposition of complete Stone lattices
can be found in \cite{Rade00}.

\pagebreak%

\begin{corollary} \label{Cor:Generalization}
Let $R$ be a binary relation on $U$.
\begin{enumerate}[\rm (i)]
\item If $R$ is an equivalence, then $\mathcal{RS}$ is a completely
distributive Stone lattice which is isomorphic to a direct product of chains of 
two and three elements \cite[Theorem 2]{GeWa92}. 

\item If $R$ is a partial order, then $\mathcal{RS}$ is a Stone lattice 
if and only if any connected component of $(U,R)$ is down-directed.
\end{enumerate}
\end{corollary}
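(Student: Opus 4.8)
The plan is to deduce both statements from the characterization in Theorem~\ref{Thm:Characterization}, which asserts that $\mathcal{RS}$ is a Stone lattice precisely when $R^{-1}\circ R = R\vee R^{-1}$. As a shared preliminary I would record that the inclusion $R^{-1}\circ R \subseteq R\vee R^{-1}$ holds for every quasiorder: both $R$ and $R^{-1}$ are contained in the equivalence $R\vee R^{-1}$, and that equivalence is transitive, so $R^{-1}\circ R \subseteq (R\vee R^{-1})\circ(R\vee R^{-1}) = R\vee R^{-1}$. Consequently the Stone condition reduces in every case to the reverse inclusion $R\vee R^{-1}\subseteq R^{-1}\circ R$, and this is the inclusion I would analyze in each part.

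For part (i), when $R$ is an equivalence we have $R^{-1}=R$ and $R\vee R^{-1}=R$, while reflexivity and transitivity give $R^{-1}\circ R = R\circ R = R$; hence the criterion of Theorem~\ref{Thm:Characterization} is satisfied and $\mathcal{RS}$ is a Stone lattice, its complete distributivity being already granted by Corollary~\ref{Cor:MeetsJoins}. For the explicit product description I would invoke the decomposition $\mathcal{RS}\cong\prod_{C\in\mathfrak{Co}}\mathcal{RS}(C)$ from Theorem~\ref{Thm:Representation}. The connected components of an equivalence are exactly its classes, so it remains to identify $\mathcal{RS}(C)$ for a single class $C$. Since $R(x)=C$ for every $x\in C$, a short inspection gives $X^\DOWN=X^\UP=C$ for $X=C$, $X^\DOWN=X^\UP=\emptyset$ for $X=\emptyset$, and $X^\DOWN=\emptyset$, $X^\UP=C$ for every proper nonempty $X\subsetneq C$. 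Thus $\mathcal{RS}(C)$ is the two-element chain when $C$ is a singleton and the three-element chain $(\emptyset,\emptyset)<(\emptyset,C)<(C,C)$ otherwise, which yields the stated isomorphism to a product of two- and three-element chains.

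For part (ii), with $R$ a partial order I would translate the surviving inclusion $R\vee R^{-1}\subseteq R^{-1}\circ R$ into an order-theoretic condition. Unwinding the definition of composition exactly as in the proof of Theorem~\ref{Thm:Characterization}, $(x,y)\in R^{-1}\circ R$ means there is some $z$ with $x\inR{R^{-1}}z$ and $z\inR{R}y$, that is, $z\inR{R}x$ and $z\inR{R}y$, so that $z$ is a common lower bound of $x$ and $y$ in the order $R$; and $(x,y)\in R\vee R^{-1}$ means precisely that $x$ and $y$ lie in the same connected component. Any such common lower bound $z$ is automatically in the component of $x$ and $y$, so the global inclusion $R\vee R^{-1}\subseteq R^{-1}\circ R$ holds if and only if, within each connected component, every pair of elements has a common lower bound, i.e.\ if and only if each component is down-directed under the restriction of $R$. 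Combined with Theorem~\ref{Thm:Characterization}, this gives the asserted equivalence.

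The remaining work is routine: the case evaluation of the approximation operators on a single class in part (i), and the set-theoretic unwinding of the composition in part (ii). I expect no genuine obstacle; the only point requiring a little care is verifying that a common lower bound witnessing membership in $R^{-1}\circ R$ necessarily lies in the same connected component, so that the global criterion of Theorem~\ref{Thm:Characterization} decouples cleanly into the per-component down-directedness condition.
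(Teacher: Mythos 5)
Your proposal is correct and follows essentially the same route as the paper: both parts are deduced from Theorem~\ref{Thm:Characterization}, with complete distributivity from Corollary~\ref{Cor:MeetsJoins}, the product decomposition in (i) from Theorem~\ref{Thm:Representation} together with the same case analysis of $\mathcal{RS}(C)$ on a single class, and (ii) obtained by unwinding $R^{-1}\circ R = R\vee R^{-1}$ into per-component down-directedness. Your only addition is to make explicit the step the paper labels ``easy to check,'' namely reducing the equality to the inclusion $R\vee R^{-1}\subseteq R^{-1}\circ R$ and verifying that a common lower bound lies in the same component, which is a faithful elaboration rather than a different argument.
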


\begin{proof}
(i) The fact that $\mathcal{RS}$ is a completely distributive lattice
follows from Corollary~\ref{Cor:MeetsJoins}. 
Additionally, $R^{-1} \circ R = R\vee R^{-1}$ because $R$ is an equivalence. 
Hence, Theorem~\ref{Thm:Characterization}
implies that $\mathcal{RS}$ is a Stone lattice. In view of
Theorem~\ref{Thm:Representation}, we have that $\mathcal{RS}$ and $\prod_{C \in \mathfrak{Co}}\mathcal{RS}(C)$
are isomorphic, where $\mathfrak{Co}$ is now the set of the equivalence classes of $R$. 
If $C \in \mathfrak{Co}$ consists of a single element $a$, then 
$\textit{RS\/}(C) = \{(\emptyset,\emptyset ), (\{a\},\{a\})\}$ 
and $\mathcal{RS}(C)$ is a chain of two elements. 
Similarly, if $|C| \geq 2$, then  
$\textit{RS\/}(C) = \{ (\emptyset ,\emptyset), (\emptyset,C), (C,C) \}$
and $\mathcal{RS}(C)$ is a chain of three elements. Therefore, $\mathcal{RS}$
is of the form $\mathbf{2}^I \times \mathbf{3}^J$,
where $I \cup J = \mathfrak{Co}$ and $I \cap J = \emptyset$. As earlier, $I$ stands for
singleton equivalence classes and $J$ denotes non-singleton $R$-classes.

(ii) As the connected components in $\mathfrak{Co}$ are the equivalence classes of 
$R \vee R^{-1}$, it is easy to check that $R^{-1} \circ R = R \vee R^{-1}$ 
if and only if for any $C \in \mathfrak{Co}$ and $a,b\in C$, there exists $c\in C$ with 
$c \inR{R} a$ and $c\inR{R}b$. This means that all connected components $C$ are
down-directed with respect to the partial order $R$ restricted to $C$.
\end{proof}

\subsection*{Acknowledgement}
We would like to take the opportunity to thank the
anonymous referee whose insightful comments and
suggestions helped us to improve the paper significantly.

\providecommand{\bysame}{\leavevmode\hbox to3em{\hrulefill}\thinspace}
\providecommand{\MR}{\relax\ifhmode\unskip\space\fi MR }
\providecommand{\MRhref}[2]{%
  \href{http://www.ams.org/mathscinet-getitem?mr=#1}{#2}
}
\providecommand{\href}[2]{#2}


\begin{thebibliography}{10}

\bibitem{Alex37}
Pavel Alexandroff, \emph{Diskrete r{\"a}ume}, Matemati{\v{c}}eskij Sbornik
  \textbf{2} (1937), 501--518.

\bibitem{Benthem84}
{Johan~van} Benthem, \emph{Correspondence theory}, Handbook of Philosophical
  Logic. Volume II: Extensions of Classical Logic (D.~Gabbay and F.~Guenthner,
  eds.), Reidel, Dordrecht, 1984, pp.~167--247.

\bibitem{Birk37}
Garrett Birkhoff, \emph{Rings of sets}, Duke Mathematical Journal \textbf{{3}}
  (1937), 443--454.

\bibitem{BlythVarlet94}
Thomas~S. Blyth and Jules~C. Varlet, \emph{Ockham algebras}, Oxford Science
  Publications, Oxford University Press, New York, 1994.

\bibitem{BGO94}
Chris Brink, Dov Gabbay, and Hans~J{\"u}rgen Ohlbach, \emph{Towards automating
  duality}, Journal of Computers and Mathematics with Applications \textbf{29}
  (1994), 73--90.

\bibitem{BuSa81}
Stanley~N. Burris and H.~P. Sankappanavar, \emph{A course in universal
  algebra}, Graduate Texts in Mathematics, vol.~78, Springer-Verlag, New York,
  Heidelberg, Berlin, 1981.

\bibitem{Com91}
Stephen~D. Comer, \emph{An algebraic approach to the approximation of
  information}, Fundamenta Informaticae \textbf{14} (1991), 492--502.

\bibitem{Erne91}
Marcel Ern\'{e}, \emph{The {ABC} of order and topology}, Category Theory at
  Work (H.~Herlich and H.-E. Porst, eds.), Heldermann, Berlin, 1991,
  pp.~57--83.

\bibitem{ErneRein95}
Marcel Ern\'{e} and J{\"u}rgen Reinhold, \emph{Intervals in lattices of
  quasiorders}, Order \textbf{12} (1995), 375--403.

\bibitem{GeWa92}
Mai Gehrke and Elbert Walker, \emph{On the structure of rough sets}, Bulletin
  of Polish Academy of Sciences. Mathematics \textbf{40} (1992), 235--245.

\bibitem{Iwin87}
Tadeusz~B. Iwi{\'n}ski, \emph{Algebraic approach to rough sets}, Bulletin of
  Polish Academy of Sciences. Mathematics \textbf{35} (1987), 673--683.

\bibitem{Jarv00}
Jouni J{\"a}rvinen, \emph{Approximations and rough sets based on tolerances},
  Lecture Notes in Computer Science \textbf{2005} (2001), 182--189.

\bibitem{Jarv04}
\bysame, \emph{The ordered set of rough sets}, Lecture Notes in Computer
  Science \textbf{3066} (2004), 49--58.

\bibitem{Jarv07}
\bysame, \emph{Lattice theory for rough sets}, Transactions on Rough Sets
  \textbf{{VI}} (2007), 400--498.

\bibitem{KomPolSko98}
Jan Komorowski, Zdzis{\l}aw Pawlak, Lech Polkowski, and Andrzej Skowron,
  \emph{Rough sets: A tutorial}, Rough Fuzzy Hybridization. A New Trend in
  Decision Making (Sankar~K. Pal and Andrzej Skowron, eds.), Springer,
  Singapore, 1999, pp.~3--98.

\bibitem{OrlPaw84}
Ewa Or{\l}owska and Zdzis{\l}aw Pawlak, \emph{Representation of
  nondeterministic information}, Theoretical Computer Science \textbf{29}
  (1984), 27--39.

\bibitem{PagChak08}
Piero Pagliani and Mihir Chakraborty, \emph{A geometry of approximation.
  {Rough} set theory: Logic, algebra and topology of conceptual patterns},
  Springer, 2008.

\bibitem{Pawl82}
Zdzis{\l}aw Pawlak, \emph{Rough sets}, International Journal of Computer and
  Information Sciences \textbf{11} (1982), 341--356.

\bibitem{PomPom88}
Jacek Pomyka{\l}a and Janusz~A. Pomyka{\l}a, \emph{The {S}tone algebra of rough
  sets}, Bulletin of Polish Academy of Sciences. Mathematics \textbf{36}
  (1988), 495--512.

\bibitem{Rade00}
S{\'a}ndor Radeleczki, \emph{Classification systems and the decompositions of a
  lattice into direct products}, Miskolc Mathematical Notes \textbf{1} (2000),
  145--156.

\bibitem{Rade03}
\bysame, \emph{The direct decomposition of l-algebras into products of
  subdirectly irreducible factors}, Journal of the Australian Mathematical
  Society \textbf{75} (2003), 41--56.

\bibitem{Ran52}
George~N. Raney, \emph{{Completely distributive complete lattices}},
  Proceedings of American Mathematical Society \textbf{3} (1952), 677--680.

\bibitem{Schmitt76}
Peter~H. Schmitt, \emph{Normal double {Stone} algebras}, Algebra Universalis
  \textbf{7} (1977), 205--209.

\bibitem{Stone68}
Arthur~Harold Stone, \emph{On partitioning ordered sets into cofinal subsets},
  Mathematika \textbf{15} (1968), 217--222.

\bibitem{Trott92}
William~T. Trotter, \emph{Combinatorics and partially ordered sets: Dimension
  theory}, Johns Hopkins Series in the Mathematical Sciences, Johns Hopkins
  University Press, Baltimore, MD, 1992.

\end{thebibliography}

\end{document}